\newcounter{foo}
\def\rs#1{\expandafter{\romannumeral#1}}%
\newcommand{\Z}{\mathbb{Z}}
\newcommand{\Q}{\mathbb{Q}}
\newcommand{\R}{\mathbb{R}}
\newcommand{\C}{\mathbb{C}}
\newcommand{\h}{\mathcal{H}}
\newcommand{\dR}{\mathrm{dR}}
\newcommand{\GL}{\mathrm{GL}}
\newcommand{\SL}{\mathrm{SL}}
\newcommand{\Fil}{\operatorname{Fil}}
\newtheorem{thm}{Theorem}[section]
\newtheorem{lem}[thm]{Lemma}
\newtheorem{cor}[thm]{Corollary}
\newtheorem{prop}[thm]{Proposition}
\newtheorem{conj}[thm]{Conjecture}
\theoremstyle{definition}
\newtheorem{defn}[thm]{Definition}
\newtheorem{rem}[thm]{Remark}
\theoremstyle{remark}
\begin{document}
\title[\ Regulators for Rankin-Selberg products]{\ Regulators for Rankin-Selberg products of modular forms}
\author{Fran\c{c}ois Brunault and Masataka Chida}

\renewcommand{\thefootnote}{\fnsymbol{footnote}}

\date{\today}
\keywords{Rankin-Selberg product, Regulator, Beilinson conjecture}
\address{\'ENS Lyon, UMPA, 46 all\'ee d'Italie, 69007 Lyon, France}
\email{francois.brunault@ens-lyon.fr}
\address{Department of Mathematics, Kyoto University, Kitashirakawa Oiwake-cho, Sakyo-ku, Kyoto, 606-8502, Japan}
\email{chida@math.kyoto-u.ac.jp}

\begin{abstract}
We prove a weak version of Beilinson's conjecture for non-critical values of $L$-functions for the Rankin-Selberg product of two modular forms.
\end{abstract}
\maketitle

\section*{Introduction}

In his fundamental paper \cite[\S 6]{Beilinson1985}, Beilinson introduced the so-called \emph{Beilinson-Flach elements} in the higher Chow group of a product of two modular curves and related their image under the regulator map to special values of Rankin $L$-series of the form $L(f \otimes g,2)$, where $f$ and $g$ are newforms of weight $2$, as predicted by his conjectures on special values of $L$-functions. These elements were later exploited by Flach \cite{Flach} to prove the finiteness of the Selmer group associated to the symmetric square of an elliptic curve. More recently, Bertolini, Darmon and Rotger \cite{BDR} established a $p$-adic analogue of Beilinson's result, while Lei, Loeffler and Zerbes \cite{LLZ} constructed a cyclotomic Euler system whose bottom layer are the Beilinson-Flach elements. These results have many important arithmetic applications (\cite{BDR2}, \cite{LLZ}).

Our aim in this paper is to define an analogue of the Beilinson-Flach elements in the motivic cohomology of a product of two Kuga-Sato varieties and to prove an analogue of Beilinson's formula for special values of Rankin $L$-series associated to newforms $f$ and $g$ of any weight $\geq 2$. More precisely, we prove the following theorem.

\begin{thm}
Let $f \in S_{k+2}(\Gamma_1(N_f),\chi_f)$ and $g \in S_{\ell+2}(\Gamma_1(N_g),\chi_g)$ be newforms with $k,\ell \geq 0$. Assume that the Dirichlet character $\chi$ modulo $N=\operatorname{lcm}(N_f,N_g)$ induced by $\chi_f \chi_g$ is non-trivial. Let $j$ be an integer satisfying $0 \leq j \leq \min \{ k,\ell\}$. Assume that the automorphic factor $R_{f,g,N}(j+1)$ defined in Section \ref{Rankin-Selberg} is non-zero (this holds for example if $\gcd(N_f,N_g)=1$ or if $k+\ell-2j \not\in \{0,1,2\}$). Then the weak version of Beilinson's conjecture for $L(f \otimes g, k+ \ell+2-j)$ holds.
\end{thm}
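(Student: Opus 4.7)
The plan is to adapt Beilinson's original weight-two strategy \cite{Beilinson1985} to higher weights, replacing the modular curve $X_1(N)$ by appropriate Kuga-Sato varieties and replacing the Steinberg-symbol class $\{u, v\}$ on $Y_1(N) \times Y_1(N)$ by a cup-product/pushforward of Beilinson's Eisenstein symbols. Let $W_k$ (resp.\ $W_\ell$) be the Kuga-Sato variety of dimension $k+1$ (resp.\ $\ell+1$) built from the universal elliptic curve over the modular curve of level $N=\operatorname{lcm}(N_f,N_g)$, so that the relevant piece of $H^{k+\ell+3}_{\mathcal{M}}(W_k\times W_\ell,\Q(k+\ell+2-j))$ projects onto the $(f,g)$-isotypic component carrying $L(f\otimes g,\,k+\ell+2-j)$, which is non-critical in the stated range.

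The first step is to construct a Beilinson-Flach class in this motivic cohomology group. For each integer $m\geq 0$, Beilinson's Eisenstein symbol produces a class $\operatorname{Eis}^m_{\mathcal{M}}(\phi)$ in $H^{m+1}_{\mathcal{M}}(\mathcal{E}^m,\Q(m+1))$, where $\mathcal{E}^m$ is the $m$-fold fiber product of the universal elliptic curve and $\phi$ is test data on the torsion of $E$. The plan is to apply this with $m=k+\ell-2j$ on a modular curve diagonally embedded into $W_k\times W_\ell$, and then to push forward along a diagonal-type embedding $\mathcal{E}^{k+\ell-2j}\hookrightarrow \mathcal{E}^k\times\mathcal{E}^\ell$ cupped with appropriate classes coming from the graph of multiplication by torsion sections. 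The parameter $j$ governs the amount of twisting used in this diagonal embedding; the range $0\leq j\leq\min\{k,\ell\}$ is precisely what makes this combinatorial construction available.

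The Beilinson regulator of Eisenstein symbols is computed explicitly by Beilinson and Deninger-Scholl in terms of real-analytic Eisenstein series. Pairing the resulting class in real Deligne cohomology with the Hodge realization of the $(f,g)$-isotypic component reduces, after a standard unfolding, to a Rankin-Selberg integral of the schematic form
\[
\int_{Y_1(N)} f(z)\,\overline{g(z)}\, E_{\chi}^{*}(z,s)\, y^{a}\,\frac{dx\,dy}{y^{2}},
\]
where $E_{\chi}^{*}$ is a non-holomorphic Eisenstein series of nebentypus $\bar\chi$ and $a$ depends on $k,\ell,j$. Standard Rankin-Selberg machinery (Shimura's method, together with an analysis of the newform theory for $f$ and $g$) then evaluates this integral as a product of $L(f\otimes g,k+\ell+2-j)$, an archimedean Gamma-factor, and the finite automorphic factor $R_{f,g,N}(j+1)$. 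The assumption $\chi\neq 1$ removes the only pole of the Eisenstein series, and the assumption $R_{f,g,N}(j+1)\neq 0$ gives the non-triviality needed to conclude the weak Beilinson conjecture: the rational subspace generated by the constructed classes hits the predicted line in Deligne cohomology.

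The main technical obstacle is the explicit Rankin-Selberg calculation in the presence of nontrivial level structure: one must carefully track how the Eisenstein symbol on the modular curve of level $N$ interacts with the two newforms $f$ and $g$ of levels $N_f\mid N$ and $N_g\mid N$, and this is exactly what gets packaged into the automorphic factor $R_{f,g,N}(j+1)$ whose non-vanishing conditions must be verified. A secondary difficulty is to check that the Eisenstein symbol construction really produces a class in the \emph{integral} motivic cohomology of the smooth compactifications of the Kuga-Sato varieties; this will require boundary/residue calculations at the cusps, in the spirit of Scholl's extensions of the Eisenstein symbol to compactified Kuga-Sato varieties.
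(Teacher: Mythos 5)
Your proposal follows essentially the same route as the paper: the class is built by pulling back the Eisenstein symbol $\mathrm{Eis}^{k+\ell-2j}$ and pushing it forward along a partial diagonal into $E^k\times E^\ell$, its regulator is paired with $\omega_{f^*}\otimes\overline{\omega_g}$ and unfolded via Shimura's Rankin--Selberg formula (with $\chi\neq 1$ excluding the pole and $R_{f,g,N}(j+1)\neq 0$ giving non-triviality), and the extension to the compactified Kuga--Sato varieties is handled by residue computations at the cusps exactly as you anticipate. The one caveat is that the paper's weak form of the conjecture deliberately avoids integrality, so your final ``integral motivic cohomology'' check should be read as the extension-to-the-boundary step (Sections \ref{residues} and \ref{ext boundary}) rather than integrality over $\Z$ in the sense of $H_{\mathcal{M}}(\cdot)_{\Z}$.
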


The range of critical values (in the sense of Deligne) for the Rankin-Selberg $L$-function $L(f \otimes g,s)$ is given by $\min \{ k,\ell \} +2 \leq s \leq \max \{ k,\ell\} +1$, so that our $L$-value $L(f \otimes g,k+\ell+2-j)$ is \emph{non-critical}. In fact, the integers $0 \leq j \leq \min \{k,\ell\}$ are precisely those at which the dual $L$-function $L(f^* \otimes g^*,s+1)$ vanishes \emph{at order 1}. 

We refer to Theorem \ref{regulator} for the explicit formula giving the regulator of our generalized Beilinson-Flach elements. In the weight $2$ case, an explicit version of Beilinson's formula for $L(f \otimes g,2)$, similar to Theorem \ref{regulator}, was proved by Baba and Sreekantan \cite{BabaSreekantan} and by Bertolini, Darmon and Rotger \cite{BDR}. In the higher weight case, a similar formula for the regulator of generalized Beilinson-Flach elements was proved by Scholl (unpublished) and recently by Kings, Loeffler and Zerbes \cite{KLZ}. As a difference with \cite{KLZ}, we work directly with the motivic cohomology of the Kuga-Sato varieties (instead of motivic cohomology with coefficients), and we prove that our generalized Beilinson-Flach elements extend to the boundary of the Kuga-Sato varieties (see Sections \ref{residues} and \ref{ext boundary}). Another interesting problem is the integrality of the generalized Beilinson-Flach elements. In the case $f$ and $g$ have weight $2$, Scholl proved that if $g$ is not a twist of $f$, then the Beilinson-Flach elements belong to the integral subspace of motivic cohomology \cite[Theorem 2.3.4]{Scholl integrality}. We do not investigate integrality in this article, but it would be interesting to do so using Scholl's techniques.

The plan of this article is as follows. In Section \ref{sec beilinson}, we recall the statement of Beilinson's conjecture for Grothendieck motives. In Section \ref{sec modular forms}, we recall some basic results about motives of modular forms and describe explicitly the Deligne cohomology group associated to the Rankin product of two modular forms. After recalling Beilinson's theory of the Eisenstein symbol (Section \ref{sec eisenstein}), we construct in Section \ref{construction} special elements $\Xi^{k,\ell,j}(\beta)$ in the motivic cohomology of the product of two Kuga-Sato varieties. After recalling standard facts about the Rankin-Selberg $L$-function (Section \ref{Rankin-Selberg}), we compute the regulator of our elements $\Xi^{k,\ell,j}(\beta)$ in Section \ref{computation}. We then show in Sections \ref{residues} and \ref{ext boundary}, using motivic techniques, that a suitable modification of the elements $\Xi^{k,\ell,j}(\beta)$ extends to the boundary of the Kuga-Sato varieties. Finally, we give in Section \ref{application} the application of our results to Beilinson's conjecture.

We would like to thank Fr\'ed\'eric D\'eglise and J\"org Wildeshaus for very stimulating discussions on the topic of motives.
We also thank Jan Nekov\'a\v{r} for helpful discussions and comments on our work.
The second author is partly supported by JSPS KAKENHI Grant Number 23740015
and the research grant of Hakubi project of Kyoto University.

\section{Beilinson's conjecture}\label{sec beilinson}

Let $X$ be a smooth projective variety over $\Q$.
For a non-negative integer $i$ and an integer $j$, let $H_{\mathcal{M}}^i(X,\Q (j))$ be the motivic
cohomology and $H_{\mathcal{D}}^i(X_\R,\R (j))$ be the Deligne cohomology.
Then one can define natural $\Q$-structures $\mathcal{B}_{i,j}$ and $\mathcal{D}_{i,j}$ in $\mathrm{det}_\R (H_{\mathcal{D}}^{i+1}(X_\R,\R (j)))$
(see Deninger-Scholl \cite[2.3.2]{Deninger-Scholl} or Nekov\'a\v{r} \cite[(2.2)]{Nekovar}).
Denote the integral part of motivic cohomology by
$H_{\mathcal{M}}^i(X,\Q (j))_{\Z}$.
Then Beilinson defined the regulator map
$$
r_{\mathcal{D}} :H_{\mathcal{M}}^{i+1}(X,\Q (j))\to H_{\mathcal{D}}^{i+1}(X_\R,\R (j))
$$
and formulated a conjecture for the special values of the $L$-function $L(h^i(X),s)$ as follows.
\begin{conj}[Beilinson \cite{Beilinson1985}]\label{BeilinsonConj}
Assume $j>(i+2)\slash 2$.
\begin{enumerate}
\item The map $r_{\mathcal{D}}\otimes \R : H_{\mathcal{M}}^{i+1}(X,\Q (j))_{\Z}\otimes \R 
\to H_{\mathcal{D}}^{i+1}(X_\R,\R (j))$ is an isomorphism.
\item We have $r_{\mathcal{D}}(\mathrm{det} H_{\mathcal{M}}^{i+1}(X,\Q (j))_{\Z})=L(h^i(X),j) \cdot \mathcal{D}_{i,j}=L^*(h^i(X),i+1-j) \cdot \mathcal{B}_{i,j}$,
where $L^*(h^i(X),m)$ is the leading term of the Taylor expansion of $L(h^i(X),s)$ at $s=m$.
\end{enumerate}
\end{conj}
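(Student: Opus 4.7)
The statement is Beilinson's conjecture (1)--(2) in its full form for an arbitrary smooth projective variety $X/\Q$ with $j > (i+2)/2$. Since this is a deep open problem, my plan would follow the template that has succeeded in the special cases where the conjecture is fully known (curves of small genus, Kuga-Sato varieties of low dimension, certain Shimura varieties). I would first reduce to the level of the motive $M = h^i(X)$. Via Carlson's description of extensions in mixed Hodge structures, $H^{i+1}_{\mathcal{D}}(X_\R,\R(j))$ is canonically isomorphic to $\bigl(M_\R/F^j M_\R\bigr)^{F_\infty}$; the rational structure $\mathcal{D}_{i,j}$ would come from the Betti--de Rham comparison on $H^i(X)(j)$, while $\mathcal{B}_{i,j}$ would differ from it, via the functional equation, by the expected period factor.

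For part (1), I would produce a $\Q$-basis of $H^{i+1}_{\mathcal{M}}(X,\Q(j))_\Z$ in the predicted dimension using cup products of K-theoretic units supported on codimension-$j$ subvarieties (Bloch-Beilinson symbols), Eisenstein-type symbols on any modular components of $X$, and norm-compatible elements in towers of \'etale coverings. Integrality would be verified using a regular model of $X$ over $\Z$ together with purity in motivic cohomology. The harder half of (1) is surjectivity of $r_{\mathcal{D}}\otimes\R$: one must produce enough classes to span $H^{i+1}_{\mathcal{D}}(X_\R,\R(j))$ and show that their regulators are linearly independent, which in effect forces a dimension count for motivic cohomology.

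For part (2), I would compute the regulator of each constructed class via the Beilinson current formula, yielding a period integral of a differential form against a transcendental function; I would then represent $L(h^i(X),j)$ by an integral formula (Rankin-Selberg for the motive of the present paper; Eisenstein-Siegel integrals or trace-formula identities more generally) whose integrand matches the regulator period up to a controlled rational factor. Passing from equality up to $\Q^\times$ to the exact identification of determinants required in (2) would then demand tracking the two $\Q$-structures $\mathcal{B}_{i,j}$ and $\mathcal{D}_{i,j}$ precisely, which in practice involves controlling Deligne periods and the rational factors produced by Euler factors at bad primes.

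The decisive obstacle is the construction of a full $\Q$-basis of integral motivic classes in the predicted dimension. Current techniques produce only a single natural class, or a small family of explicit classes such as the generalized Beilinson-Flach elements $\Xi^{k,\ell,j}(\beta)$ of Section \ref{construction}, rather than a basis; even producing a second independent class is typically out of reach. For this reason the present paper establishes only the weak form of Conjecture \ref{BeilinsonConj} for the Rankin-Selberg motive, and a proof of the full statement above --- including both the isomorphism in (1) and the exact determinant identity in (2) --- appears to be inaccessible by present methods.
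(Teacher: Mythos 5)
This statement is Beilinson's conjecture itself, which the paper states without proof (it is an open problem; the paper only proves the \emph{weak} version, Conjecture \ref{Conj Beilinson weak}, for the specific Rankin--Selberg motive $M(f\otimes g)$). Your assessment is correct and consistent with the paper: you rightly identify that no proof exists, that the decisive obstacle is producing a full basis of integral motivic classes rather than a single explicit class, and that the paper accordingly establishes only the weak form via the elements $\Xi^{k,\ell,j}(\beta)$.
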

In the case of the near-central point $j=(i+2)\slash 2$, we have the following modified conjecture. Let $N^{j-1}(X)=\mathrm{CH}^{j-1}(X)_{\mathrm{hom}} \otimes \Q$ be the group of $(j-1)$-codimensional cycles modulo homological equivalence. The cycle class map into de Rham cohomology defines an extended regulator map
$$
\hat{r}_{\mathcal{D}} : H_{\mathcal{M}}^{i+1}(X,\Q (j)) \oplus N^{j-1}(X) \to H_{\mathcal{D}}^{i+1}(X_\R,\R (j)).
$$

\begin{conj}[Beilinson \cite{Beilinson1985}]\label{BeilinsonConj near-central}
Assume $j=(i+2)\slash 2$.
\begin{enumerate}
\item (Tate's conjecture) We have $\mathrm{ord}_{s=j} L(h^i(X),s) = -\dim_\Q N^{j-1}(X)$.
\item The map $\hat{r}_{\mathcal{D}}\otimes \R : (H_{\mathcal{M}}^{i+1}(X,\Q (j))_\Z \oplus N^{j-1}(X)) \otimes \R \to H_{\mathcal{D}}^{i+1}(X_\R,\R (j))$ is an isomorphism.
\item We have $\hat{r}_{\mathcal{D}}(\mathrm{det} (H_{\mathcal{M}}^{i+1}(X,\Q (j))_{\Z} \oplus N^{j-1}(X)))=L^*(h^i(X),j) \cdot \mathcal{D}_{i,j}=L^*(h^i(X),j-1) \cdot \mathcal{B}_{i,j}$.
\end{enumerate}
\end{conj}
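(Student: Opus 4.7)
The strategy is to construct explicit classes in the motivic cohomology of a smooth compactification of a product of two Kuga-Sato varieties, verify they lie in the integral subspace, and show via the explicit regulator formula (Theorem \ref{regulator}) that they produce a nonzero rational multiple of the predicted $L$-value in the relevant Deligne cohomology.

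First, I identify the motive and the $\Q$-structure. Using Scholl's projectors on the Kuga-Sato varieties, the Grothendieck motive $M(f)\otimes M(g)$ is realized as a direct summand of the motive of a smooth compactification $W$ of the appropriate Kuga-Sato product. Setting $i=k+\ell+2$ and the Beilinson twist $j'=k+\ell+2-j$, the $(f,g)$-isotypic projection of the target Deligne cohomology group $H^{i+1}_{\mathcal{D}}(W_\R,\R(j'))$ is one-dimensional over $\R$, spanned by the product of Deligne periods (Section \ref{sec modular forms}). For $j<(k+\ell)/2$ one has $j'>(i+2)/2$ and we lie in the range of Conjecture \ref{BeilinsonConj}; the borderline case ($k=\ell$ and $j=k$, which includes the weight-two case $k=\ell=j=0$) lies at the near-central point of Conjecture \ref{BeilinsonConj near-central}, but because $\chi$ is nontrivial the $(f,g)$-isotypic piece of $M(f)\otimes M(g)$ carries no Tate classes, so the algebraic cycle contribution $N^{j'-1}$ vanishes on this summand and the weak form of the near-central conjecture reduces to the same statement.

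Second, I construct the generalized Beilinson-Flach classes and prove integrality. For a suitable Eisenstein symbol datum $\beta$ (Section \ref{sec eisenstein}), the class $\Xi^{k,\ell,j}(\beta)\in H^{i+1}_{\mathcal{M}}$ of the open Kuga-Sato product is obtained from the Eisenstein symbol by a push-pull construction along the diagonal embedding of a modular curve into a product of two (Section \ref{construction}). I then analyze the residues of $\Xi^{k,\ell,j}(\beta)$ at the boundary divisors; after projection onto the $(f,g)$-isotypic part, these residues are forced to vanish by the nontriviality of $\chi$, so a modified class $\widetilde\Xi^{k,\ell,j}(\beta)$ extends to $W$ and lies in $H^{i+1}_{\mathcal{M}}(W,\Q(j'))_{\Z}$ (Sections \ref{residues} and \ref{ext boundary}). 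Third, Theorem \ref{regulator} identifies the projection of $r_{\mathcal D}(\widetilde\Xi^{k,\ell,j}(\beta))$ onto the $(f,g)$-isotypic Deligne cohomology with
\[
c\cdot R_{f,g,N}(j+1)\cdot L(f\otimes g,\,k+\ell+2-j)
\]
times a canonical generator of the Beilinson $\Q$-structure, for some explicit nonzero constant $c$; this formula follows by matching, via Shimura's integral representation, a Rankin-Selberg convolution integral against the holomorphic forms attached to $f$ and $g$ (Section \ref{computation}). By hypothesis $R_{f,g,N}(j+1)\ne 0$, and $L(f\otimes g,k+\ell+2-j)\ne 0$ (by absolute convergence, or by Jacquet-Shalika at the edge in the borderline case); hence the regulator image is nonzero. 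Since the target is one-dimensional over $\R$, this single class spans the $\Q$-line predicted by Beilinson's conjecture, which is the weak form of the statement.

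The main technical obstacle is the integrality step: isolating the $(f,g)$-projection of the motivic boundary residues of $\Xi^{k,\ell,j}(\beta)$ and showing they vanish motivically from the nontriviality of $\chi$, since this requires controlling the Eisenstein symbol at each of the two cuspidal boundary components of the Kuga-Sato product and checking cancellation in motivic cohomology rather than in a realization. The regulator-to-$L$-value matching is also substantial, but it is a classical unfolding computation against the Rankin-Selberg kernel, carried out by evaluating the pairing of the explicit Deligne representative of $r_{\mathcal D}(\Xi^{k,\ell,j}(\beta))$ with the antiholomorphic differentials $\overline{\omega}_f\wedge\overline{\omega}_g$.
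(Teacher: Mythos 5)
The statement you are asked about is not a theorem of the paper: it is Conjecture \ref{BeilinsonConj near-central}, Beilinson's conjecture at the near-central point $j=(i+2)/2$ for an arbitrary smooth projective variety $X/\Q$, stated by the authors only as background in Section \ref{sec beilinson}. The paper offers no proof of it and could not: part (1) is literally Tate's conjecture on the order of vanishing of $L(h^i(X),s)$ at $s=j$ in terms of codimension-$(j-1)$ cycles, and part (2) asserts that the extended regulator from the integral part of motivic cohomology (plus cycles) becomes an isomorphism after tensoring with $\R$ — both are wide open for general $X$ and are not touched by anything in this article. No construction of particular classes on Kuga--Sato varieties can establish surjectivity of $\hat{r}_{\mathcal{D}}\otimes\R$ onto the full Deligne cohomology of an arbitrary $X$, let alone the cycle-theoretic order-of-vanishing statement.

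Your proposal is in fact a sketch of the proof of a different statement, namely the paper's Theorem \ref{main thm} and its corollary (the \emph{weak} version, Conjecture \ref{Conj Beilinson weak}, for the specific motive $M(f\otimes g)(k+\ell+2-j)$). Even read as a proof of that result, it overclaims in two places: the paper does not prove integrality (the authors explicitly decline to show $\mathrm{BF}^{k,\ell,j}(\beta)\in H^{i+1}_{\mathcal{M}}(W,\Q(j'))_{\Z}$, and the weak conjecture is formulated precisely so as to avoid this), and the relevant $L$-value in Theorem \ref{regulator} is the \emph{derivative} $L'(f^*\otimes g^*,j+1)$ of the dual $L$-function at a point of order-one vanishing, transported to $L(f\otimes g,k+\ell+2-j)$ only via the conjectural compatibility with the functional equation \cite[(2.2.2)]{Nekovar} — not the value $L(f\otimes g,k+\ell+2-j)$ obtained directly by absolute convergence as you assert. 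The correct response to the statement as posed is that it is an open conjecture, recalled for context, with no proof in the paper.
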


We will now formulate a version of Beilinson's conjectures for Grothendieck motives. Let $M=(X,p)$ be a Grothendieck motive over $\Q$ with coefficients in $L$,
where $X$ is a smooth projective variety over $\Q$ and $p$ is a projector
in $\mathrm{CH}^{\mathrm{dim}X}(X\times X)_{\mathrm{hom}}\otimes_\Q L$. We define the Deligne cohomology of $M$ by
$$
H^{\cdot}_{\mathcal{D}}(M,j)=p_* (H_{\mathcal{D}}^{\cdot}(X_\R,\R (j))\otimes L).
$$
Let us assume that $M$ is a direct factor of $h^i(X) \otimes L$. We have an $L$-function $L(M,s)=L(H^i(M),s)$ taking values in $L \otimes \C$. Moreover, there are natural $L$-structures $\mathcal{B}_{i,j}(M)$ and $\mathcal{D}_{i,j}(M)$ in
$\mathrm{det}_{L \otimes \R}H^{i+1}_{\mathcal{D}}(M,j)$. We define Beilinson's regulator as
$$
r_{\mathcal{D}}:H_{\mathcal{M}}^{i+1}(X,\Q (j))\otimes L \to H^{i+1}_{\mathcal{D}}(X_\R,\R (j))\otimes L
\to H^{i+1}_{\mathcal{D}}(M,j),
$$
where the last map is the projection induced by $p_*$. Similarly, we define an extended regulator map $\hat{r}_{\mathcal{D}}$ in the case $j=(i+2)/2$. Assume Conjecture \ref{BeilinsonConj} (1) for $X$. Then Beilinson's conjecture for $L(M,s)$ can be
formulated as follows.
\begin{conj}\label{motive}
\begin{enumerate}
\item If $j> (i+2)/2$, then $p_*(r_{\mathcal{D}}(\mathrm{det}_L H_{\mathcal{M}}^{i+1}(X,\Q (j))_{\Z}\otimes L))=L(M,j) \cdot \mathcal{D}_{i,j}(M)$.
\item If $j=(i+2)/2$, then $p_*(\hat{r}_{\mathcal{D}}(\mathrm{det}_L (H_{\mathcal{M}}^{i+1}(X,\Q (j))_{\Z} \oplus N^{j-1}(X))\otimes L))=L^*(M,j) \cdot \mathcal{D}_{i,j}(M)$.
\end{enumerate}
\end{conj}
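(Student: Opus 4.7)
The plan is to prove Conjecture \ref{motive} for the Grothendieck motive $M = M_f \otimes M_g$ attached to the Rankin-Selberg product of two newforms, which is exactly the content of the main theorem in the introduction in its weak form. First I would realize $M$ geometrically as a direct factor of $h^{k+\ell+2}(W_k \times W_\ell)$, where $W_k$ and $W_\ell$ are Kuga-Sato varieties of relative dimension $k$ and $\ell$ over modular curves of a common level $N$; the relevant Deligne cohomology $H^{k+\ell+3}_{\mathcal{D}}(M, k+\ell+2-j)$ that controls the non-critical value $L(f \otimes g, k+\ell+2-j)$ then admits an explicit description in terms of modular symbols and periods attached to $f$ and $g$ (Section \ref{sec modular forms}).

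The core of the argument is the construction of the generalized Beilinson-Flach elements $\Xi^{k,\ell,j}(\beta)$. Starting from Beilinson's Eisenstein symbol $\mathrm{Eis}^{k+\ell-2j}(\beta)$ on a modular curve of level $N$, I would push it along a suitable (twisted) diagonal embedding of the modular curve into $W_k \times W_\ell$ to obtain a class in $H^{k+\ell+3}_{\mathcal{M}}(W_k \times W_\ell, \Q(k+\ell+2-j))$. Its image under Beilinson's regulator is computed by pairing against the $(f,g)$-isotypic harmonic representatives; using the explicit description of the Eisenstein symbol in Deligne cohomology, this pairing reduces to an integral over $\Gamma_1(N) \backslash \h$ of $f \bar g$, twisted by an appropriate power of $\mathrm{Im}(\tau)$, against a real-analytic Eisenstein series. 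The Rankin-Selberg unfolding then rewrites this integral, up to elementary transcendental constants and the automorphic factor $R_{f,g,N}(j+1)$, as $L(f \otimes g, k+\ell+2-j)$. The nonvanishing hypothesis on $R_{f,g,N}(j+1)$ is exactly what prevents the result from being trivially zero.

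For the weak Beilinson conjecture one needs the classes to land in the \emph{integral} subspace of motivic cohomology, meaning they should come from the smooth compactified Kuga-Sato varieties. I would therefore argue that a suitable modification of $\Xi^{k,\ell,j}(\beta)$ extends over the boundary: using the localization sequence in motivic cohomology, one computes the residues of $\Xi^{k,\ell,j}(\beta)$ along each boundary stratum and subtracts an explicit correction term that kills them (Sections \ref{residues}--\ref{ext boundary}). Feeding the resulting integral class into the regulator formula from the previous paragraph produces a nonzero element of the expected rational line in Deligne cohomology, which is precisely the statement of the weak Beilinson conjecture for $L(f \otimes g, k+\ell+2-j)$.

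I expect two steps to be the main technical obstacles. First, the precise bookkeeping of transcendental constants in the Rankin-Selberg integral, so that the regulator value lands exactly in the $L$-rational line $L(f \otimes g, k+\ell+2-j) \cdot \mathcal{D}_{i,j}(M)$ and not merely in a nonzero complex multiple of it; this forces one to identify the $L$-structure on $H^{k+\ell+3}_{\mathcal{D}}(M, k+\ell+2-j)$ very carefully, matching automorphic normalizations with the motivic Deligne basis. Second, the motivic boundary extension, which demands a delicate study of the residues of the Eisenstein symbol at cusps and will be the technical heart of Sections \ref{residues}--\ref{ext boundary}; once these two ingredients are in place, the formal deduction of the weak Beilinson conjecture from the explicit regulator formula is routine.
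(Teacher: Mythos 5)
The statement you were asked about is Conjecture \ref{motive}, which the paper does not prove --- it is stated as a formulation of Beilinson's full conjecture for a Grothendieck motive $M=(X,p)$, and the paper immediately remarks that ``since Conjecture \ref{BeilinsonConj}(1) is in general out of reach, we formulate a weak version'' (Conjecture \ref{Conj Beilinson weak}). What your proposal sketches is the paper's proof of that \emph{weak} version for $M=M(f\otimes g)$, i.e.\ Theorem \ref{main thm} and its Corollary. This is a genuinely different, strictly weaker assertion. Conjecture \ref{motive} claims that $p_*\bigl(r_{\mathcal{D}}(\det_L H_{\mathcal{M}}^{i+1}(X,\Q(j))_{\Z}\otimes L)\bigr)$ --- the image of the determinant of the \emph{entire} integral motivic cohomology group --- equals $L(M,j)\cdot\mathcal{D}_{i,j}(M)$. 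To establish this one must know the dimension of $H_{\mathcal{M}}^{i+1}(X,\Q(j))_{\Z}$ and that the regulator is an isomorphism onto Deligne cohomology after tensoring with $\R$; constructing one explicit class (the generalized Beilinson--Flach element) whose regulator pairs correctly with $\Omega_{f,g}$ gives you a distinguished subspace $V$ with $\det p_*(r_{\mathcal{D}}(V\otimes L))=L(M,j)\cdot\mathcal{D}_{i,j}(M)$, but gives no control over the rest of motivic cohomology. Your argument therefore cannot close the gap between the weak and the full conjecture, and no argument in the paper does either.

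Two smaller points. First, you assert that for the weak conjecture ``one needs the classes to land in the integral subspace of motivic cohomology''; in fact the paper explicitly declines to address integrality (see the Remark after Conjecture \ref{Conj Beilinson weak}) and only requires $V\subset H_{\mathcal{M}}^{i+1}(X,\Q(j))$. The work in Sections \ref{residues}--\ref{ext boundary} is about lifting the class from the open Kuga--Sato varieties $E^k\times E^\ell$ to the smooth projective compactification $\overline{\overline{E}}^k\times\overline{\overline{E}}^\ell$ (so that the statement for the projective variety $X$ makes sense at all), which is not the same as membership in $H_{\mathcal{M}}^{i+1}(X,\Q(j))_{\Z}$. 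Second, your outline of the regulator computation (pushforward of the Eisenstein symbol along $i_*\Delta_* p^*$, pairing with $\Omega_{f,g}$, Rankin--Selberg unfolding, the role of $R_{f,g,N}(j+1)\neq 0$) does match the paper's route to Theorem \ref{regulator}; if the target statement had been the Corollary to Theorem \ref{main thm} rather than Conjecture \ref{motive}, your proposal would be essentially the paper's argument in outline.
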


Since Conjecture \ref{BeilinsonConj}(1) is in general out of reach, we formulate a weak version of Conjecture \ref{motive} as follows.

\begin{conj}[Weak version] \label{Conj Beilinson weak} Assume $M$ is a direct factor of $h^i(X) \otimes L$. Let $M^\vee = (X, \prescript{t}{}{p}, i)$ be the dual motive of $M$.
\begin{enumerate}
\item If $j>(i+2)\slash 2$, then there exists a subspace $V$ of $H_{\mathcal{M}}^{i+1}(X,\Q (j))$ such that $p_*(r_{\mathcal{D}}(V \otimes L))$ is an $L$-structure of $H_{\mathcal{D}}^{i+1}(M,j)$ and
$$
\operatorname{det} p_*(r_{\mathcal{D}}(V \otimes L)) = L(M,j) \cdot \mathcal{D}_{i,j}(M)=L^*(M^\vee,1-j) \cdot \mathcal{B}_{i,j}(M).
$$
\item If $j=(i+2)\slash 2$, then there exists a subspace $V$ of $H_{\mathcal{M}}^{i+1}(X,\Q (j)) \oplus N^{j-1}(X)$ such that $p_*(\hat{r}_{\mathcal{D}}(V \otimes L))$ is an $L$-structure of $H_{\mathcal{D}}^{i+1}(M,j)$ and
$$
\operatorname{det} p_*(\hat{r}_{\mathcal{D}}(V \otimes L)) = L^*(M,j) \cdot \mathcal{D}_{i,j}(M)=L^*(M^\vee,1-j) \cdot \mathcal{B}_{i,j}(M).
$$
\end{enumerate}
\end{conj}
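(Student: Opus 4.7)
The strategy is to verify Conjecture \ref{Conj Beilinson weak} for the Grothendieck motive $M = M(f) \otimes M(g)$, realised as a direct factor of $h^{i}(X) \otimes L$ with $i = k+\ell+2$, where $X$ is a smooth compactification of the product of Kuga-Sato varieties $E^k \times E^\ell$, $L$ is the Hecke field of $(f,g)$, and the idempotent is built from Hecke and diamond operators. Writing $j' = k+\ell+3-j$ for the motivic cohomology index, one has $j' \geq (i+2)/2$, with equality exactly in the near-central case $k = \ell = j$.

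I would take as candidate subspace $V$ the span of the generalised Beilinson-Flach elements $\Xi^{k,\ell,j}(\beta) \in H^{i+1}_{\mathcal{M}}(X, \Q(j'))_{\Z}$ from Section \ref{construction}, for $\beta$ ranging over a space of cusp test functions. These elements are obtained by pushing forward Eisenstein symbols along a diagonal-type map from a modular curve into $E^k \times E^\ell$; the crucial input from Sections \ref{residues} and \ref{ext boundary} is that a suitable motivic modification extends each $\Xi^{k,\ell,j}(\beta)$ across the cuspidal strata, producing a class on the compactification $X$. Then Theorem \ref{regulator} expresses the pairing of $p_* r_{\mathcal{D}}(\Xi^{k,\ell,j}(\beta))$ against the Hodge dual of $\omega_f \otimes \overline{\omega_g}$ (and its complex conjugate) as a Rankin-Selberg integral of an Eisenstein series attached to $\beta$ against $f$ and $\overline{g}$; by the unfolding method of Section \ref{Rankin-Selberg}, this equals an explicit non-zero period multiple of $R_{f,g,N}(j+1) \cdot L(f \otimes g, k+\ell+2-j)$. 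The hypothesis $R_{f,g,N}(j+1) \neq 0$ then guarantees that, as $\beta$ varies, the regulator images span a full $L$-structure of $H^{i+1}_{\mathcal{D}}(M,j')$, with determinant a non-zero rational multiple of $L(f \otimes g, k+\ell+2-j) \cdot \mathcal{D}_{i,j'}(M)$; the equivalent expression in terms of $L^*(M^\vee, 1-j')$ and $\mathcal{B}_{i,j'}(M)$ follows from the functional equation together with the compatibility of periods under Poincar\'e duality.

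In the near-central case $k = \ell = j$ the same construction applies, and the only additional work is to show that the cycle part $N^{j'-1}(X)$ contributes trivially after projection to the $(f,g)$-isotypic component. This is forced by the non-triviality of $\chi = \chi_f\chi_g$, which prevents $M$ from being of Tate type and thus from containing non-zero algebraic classes in the relevant codimension. The main obstacle in the argument is the regulator computation of Theorem \ref{regulator}: one must carefully unfold the pullback of the Eisenstein symbol along the diagonal of modular curves, identify the result with the Rankin-Selberg convolution, and track all elementary factors through the Hodge-theoretic comparison that intervenes in the definition of $\mathcal{D}_{i,j'}(M)$. The boundary extension of Sections \ref{residues}--\ref{ext boundary} is a second, mostly independent motivic input, requiring a careful study of the residues of $\Xi^{k,\ell,j}(\beta)$ along the cuspidal divisors and their elimination by an appropriate rational correction term.
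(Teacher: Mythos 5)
The statement you were asked to prove is a \emph{conjecture}: the paper formulates it as the target of its main result but offers no proof for a general Grothendieck motive $M=(X,p)$, and none is possible at present. What you have written is a plan for verifying the conjecture in the single instance $M=M(f)\otimes M(g)$, which is precisely what the paper does in Sections \ref{construction}--\ref{application}, and your outline does follow the paper's route: push forward Eisenstein symbols along $i_*\Delta_*p^*$ to get $\Xi^{k,\ell,j}(\beta)$, compute $\langle r_{\mathcal{D}}(\Xi^{k,\ell,j}(\beta_\chi)),\Omega_{f,g}\rangle$ by the Rankin--Selberg method (Theorem \ref{regulator}), extend the class to $\overline{\overline{E}}^k\times\overline{\overline{E}}^{\ell}$ via the residue computation and the identification $M_{gm}(\hat{E}^{k,*})^{e'_k}\simeq M_{gm}(\overline{\overline{E}}^k)^{e_k}$, and convert $L'(f^*\otimes g^*,j+1)$ into the shape of Conjecture \ref{Conj Beilinson weak} by the functional-equation compatibility of \cite[(2.2.2)]{Nekovar}. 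As a summary of the paper's strategy for this instance it is essentially accurate.

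Several details are nonetheless wrong or unjustified. (i) The twist is off by one: $\Xi^{k,\ell,j}(\beta)$ lies in $H^{k+\ell+3}_{\mathcal{M}}(\cdot,\Q(k+\ell+2-j))$, so the relevant $j'$ is $k+\ell+2-j$, not $k+\ell+3-j$; with your indexing the near-central case $j'=(i+2)/2$ could never occur, contradicting your own (correct) identification of it as $k=\ell=j$. (ii) You place the elements in the integral subspace $H^{i+1}_{\mathcal{M}}(X,\Q(j'))_{\Z}$; the paper explicitly refrains from proving integrality (see the Remark following Conjecture \ref{Conj Beilinson weak}), which is why the weak conjecture is stated without it. (iii) Your handling of the near-central case is not the paper's argument and contains an unsupported claim: $\chi_f\chi_g\neq 1$ only excludes $g=f^*$ (the pole of $D(f,g,s)$), not that $g$ is some other twist of $f^*$, so it does not by itself force the $(f,g)$-component of $N^{j'-1}(X)$ to vanish. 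The paper does not need such a statement: the weak conjecture only requires \emph{some} subspace $V$ of $H_{\mathcal{M}}^{i+1}(X,\Q(j'))\oplus N^{j'-1}(X)$ with the prescribed determinant, and the order-one vanishing of $L(f^*\otimes g^*,s)$ at $s=j+1$ together with the functional equation yields $L^*(M,j')$ directly. (iv) Since $H^{k+\ell+3}_{\mathcal{D}}(M(f\otimes g)(n))$ is free of rank one over $K_{f,g}\otimes\R$, one does not need the regulator images to ``span as $\beta$ varies'': a single $\beta_\chi$ suffices, and nonvanishing of its image is exactly the hypothesis $R_{f,g,N}(j+1)\neq 0$ combined with $L'(f^*\otimes g^*,j+1)\neq 0$, made quantitative through Lemma \ref{Poincare} and the comparison of $\Omega_{f,g}$ with the rational class $\Omega$.
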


\begin{rem}
We could have required a stronger property in Conjecture \ref{Conj Beilinson weak}, namely that $V$ is a subspace of $H_{\mathcal{M}}^{i+1}(X,\Q (j))_\Z$. But since we don't consider the problem of integrality of elements of motivic cohomology in this paper, we leave Conjecture \ref{Conj Beilinson weak} as it is.
\end{rem}

\section{Motives associated to modular forms}\label{sec modular forms}

Let us recall some basic properties of motives associated to modular forms.
Let $Y=Y(N)$ be the modular curve with full level $N$-structure defined over $\Q$
and $j:Y\hookrightarrow X=X(N)$ the smooth compactification.
Let $\pi : E \to Y $ be the universal elliptic curve over $Y$ and
$\bar{\pi}:\overline{E}\to X$ be the universal generalized elliptic curve.
Then $\overline{E}$ is smooth and proper over $\Q$.
For a non-negative integer $k$, denote the $k$-fold fiber product of $E$ over $Y$ by $E^k$
and the $k$-fold fiber product of $\overline{E}$ over $X$ by $\overline{E}^k$.
Let $\hat{E}^k$ be the N\'eron model of $E^k$ over $X$ and $\hat{E}^{k,*}$ the connected component.
If $k\geq 2$, then $\overline{E}^k$ is singular.
Let $\overline{\overline{E}}^k\to\overline{E}^k$ be the canonical desingularization constructed by Deligne.

Let $f\in S_{k+2}(\Gamma_1(N), \chi)^{\mathrm{new}}$ ($k \geq 0$) be a normalized eigenform. Let $K_f \subset \C$ be the number field generated by the Fourier coefficients of $f$. Let $M(f)$ be the Grothendieck motive associated to $f$ \cite{Scholl}. It is a motive of rank $2$ defined over $\Q$ with coefficients in $K_f$. The motive $M(f)$ is a direct factor of $h^{k+1}(\overline{\overline{E}}^k) \otimes K_f$. By Grothendieck's theorem, we have an isomorphism $H^{k+1}_B(M(f)) \otimes \C \cong H^{k+1}_{\mathrm{dR}}(M(f)) \otimes \C$ between Betti and de Rham cohomology. The $K_f \otimes \C$-module $H^{k+1}_{\mathrm{dR}}(M(f)) \otimes \C$ is free of rank $2$, with basis $\{ \omega_f, \overline{\omega_{f^*}}\}$, where
$$
\omega_f = (2\pi i)^{k+1} f(\tau) d\tau \wedge dz_1 \wedge \cdots \wedge dz_k. 
$$
We denote $\omega'_f=G(\chi)^{-1}\omega_f$,
where
$$
G (\chi)=
\sum_{u=1}^{N_{\chi}}\chi (u) e^{2\pi i u\slash N_{\chi}}
$$
is the Gauss sum of the Dirichlet character $\chi$
and $N_{\chi}$ is the conductor of $\chi$.
By \cite[Lemma 6.1.1]{KLZ}, we have
$$
\Fil^i H^{k+1}_{\mathrm{dR}}(M(f)) = \begin{cases} H^{k+1}_{\mathrm{dR}}(M(f)) & \textrm{if } i \leq 0,\\
K_f \cdot \omega'_f & \textrm{if } 1 \leq i \leq k+1,\\
0 & \textrm{if } i \geq k+2.
\end{cases}
$$
By Poincar\'e duality, we have a perfect pairing of $K_f$-vector spaces
$$
H^{k+1}_B(M(f^*)(k+1)) \times H^{k+1}_B(M(f)) \to K_f.
$$
Now, let $f\in S_{k+2}(\Gamma_1(N_f), \chi_f)^{\mathrm{new}}$, $g \in S_{\ell+2}(\Gamma_1(N_g), \chi_g)^{\mathrm{new}}$ ($\ell \geq k \geq 0$) be normalized eigenforms. We consider the Grothendieck motive
$$
M(f \otimes g) := M(f) \otimes M(g).
$$
This motive has coefficients in $K_{f,g}:=K_f K_g$ and is a direct factor of
$$
h^{k+1}(\overline{\overline{E}}_{N_f}^k) \otimes h^{\ell+1}(\overline{\overline{E}}_{N_g}^\ell) \otimes K_{f,g} \subset h^{k+\ell+2}(\overline{\overline{E}}_{N_f}^k \times \overline{\overline{E}}_{N_g}^\ell) \otimes  K_{f,g}.
$$
Let $j$ be an integer such that $0 \leq j \leq k$ and put $n=k+\ell+2-j$. The Deligne cohomology of $M(f \otimes g)(n)$ can be expressed as follows. The de Rham realization
$$
H^{k+\ell+2}_{\mathrm{dR}}(M(f \otimes g)) = H^{k+1}_{\mathrm{dR}}(M(f)) \otimes H^{\ell+1}_{\mathrm{dR}}(M(g))
$$
has dimension $4$ over $K_{f,g}$. Moreover $\Fil^n H^{k+\ell+2}_{\mathrm{dR}}(M(f \otimes g))$ is the $K_{f,g}$-line generated by $\omega'_f \otimes \omega'_g$. Then we have an exact sequence
\begin{equation}
\label{HD 2} 0 \to \Fil^n H^{k+\ell+2}_{\mathrm{dR}}(M(f \otimes g)) \otimes \R \to H^{k+\ell+2}_B(M(f \otimes g)(n-1))^+ \otimes \R \to H^{k+\ell+3}_{\mathcal{D}}(M(f \otimes g)(n)) \to 0.
\end{equation}
In particular $H^{k+\ell+3}_{\mathcal{D}}(M(f \otimes g)(n))$ is a free $K_{f,g} \otimes \R$-module of rank 1.

The exact sequence (\ref{HD 2}) induces a $K_{f,g}$-rational structure on $H^{k+\ell+3}_{\mathcal{D}}(M(f \otimes g)(n))$. Let us make explicit a generator of this rational structure. Let $e_f^{\pm}$ be a $K_f$-basis of $H^{k+1}_B(M(f))^{\pm}$, and let $e_g^{\pm}$ be a $K_g$-basis of $H^{\ell+1}_B(M(g))^{\pm}$. Under the comparison isomorphism $H^{k+1}_{B}(M(f)) \otimes \C \cong H^{k+1}_{\dR}(M(f)) \otimes \C$, we have $\omega_f = \alpha_f^+ e_f^+ + \alpha_f^- e_f^-$ for some $\alpha_f^+,\alpha_f^- \in \C$. Note that $\alpha_f^+ \in \R$ and $\alpha_f^- \in i\R$. Similarly, let $\omega_g = \alpha_g^+ e_g^+ + \alpha_g^- e_g^-$. The $K_{f,g}$-vector space $H^{k+\ell+2}_B(M(f \otimes g)(n-1))^+$ admits as a $K_{f,g}$-basis $(e_1,e_2)$ where
\begin{align*}
e_1 & = e_f^+ \otimes e_g^{(-1)^{n+1}} \otimes (2\pi i)^{n-1},\\
e_2 & = e_f^- \otimes e_g^{(-1)^n} \otimes (2\pi i)^{n-1}.
\end{align*}
The image of $\omega'_f \otimes \omega'_g$ in $H^{k+\ell+2}_B(M(f \otimes g)(n-1))^+ \otimes \R$ under (\ref{HD 2}) is given by
\begin{align*}
\pi(\omega'_f \otimes \omega'_g) & = G(\chi_f)^{-1}G(\chi_g)^{-1}\alpha_f^- \alpha_g^{(-1)^n} e_f^- \otimes e_g^{(-1)^n} + G(\chi_f)^{-1}G(\chi_g)^{-1}\alpha_f^+ \alpha_g^{(-1)^{n+1}} e_f^+ \otimes e_g^{(-1)^{n+1}}\\
& = G(\chi_f)^{-1}G(\chi_g)^{-1}(2\pi i)^{1-n} (\alpha_f^+ \alpha_g^{(-1)^{n+1}} e_1 + \alpha_f^- \alpha_g^{(-1)^n} e_2).
\end{align*}
Thus a rational structure of $H^{k+\ell+3}_{\mathcal{D}}(M(f \otimes g)(n))$ is given by
\begin{equation*}
t := G(\chi_f)G(\chi_g)(2\pi i)^{n-1} (\alpha_f^- \alpha_g^{(-1)^n})^{-1} e_1.
\end{equation*}

Since $M(f \otimes g)(n-1)^\vee \cong M(f^* \otimes g^*)(j+1)$, we have a perfect pairing
\begin{align*}
H^{k+\ell+2}_B(M(f \otimes g)(n-1)) \times H^{k+\ell+2}_B(M(f^* \otimes g^*)(j+1)) \to K_{f,g}.
\end{align*}

Now, let us define a canonical element $\Omega \in H^{k+\ell+2}_B(M(f^* \otimes g^*)(j+1)) \otimes \C$, which we will use to pair with the regulator of our generalized Beilinson-Flach element.
Under the canonical isomorphism
$$
\phi_{\dR}:H^{k+\ell+2}_{\dR}(M(f^*)\otimes M(g^*)(j+1)) \overset{\cong}{\to} 
H^{k+\ell+2}_{\dR}(M(f) \otimes M(\chi_f)\otimes M(g)\otimes M(\chi_g)(j+1)),
$$
the element $G(\overline{\chi_f})^{-1}G(\overline{\chi_g})^{-1}\omega_{f^*}\otimes \omega_{g^*}$ corresponds to a $K_{f,g}^{\times}$-rational multiple of $\omega'_f \otimes \omega(\chi_f) \otimes \omega'_g \otimes \omega(\chi_g)$, where $\omega (\chi_f)$ is basis
of $H^0_{\dR}(M(\chi_f))$.

We recall the periods for motives associated to Dirichlet characters with coefficients in $E$.
Let $M(\chi)$ be the motive associated to $\chi$ with coefficients in a number field $E$.
Then the period of the comparison isomorphism
$H^0_B(M(\chi ))=E(\chi) \to H^0_{\dR}(M(\chi))= G(\chi)\cdot E$
is given by $G(\chi)^{-1}$,
where $E(\chi)$ is the rank one $E$-vector space on which the Galois group
$\mathrm{Gal}(\Q (e^{2\pi i \slash N_{\chi}})\slash \Q )$ acts via $\chi$
and $G(\chi)\cdot E$ is the $E$-vector space generated by $G(\chi)$
(for details, see \cite[Section 6]{Deligne}).

Under the comparison isomorphism
$$\phi :
H^{k+\ell+2}_{B}(M(f) \otimes M(\chi_f)\otimes M(g)\otimes M(\chi_g)(j+1))
\otimes \C \xrightarrow{\cong}
H^{k+\ell+2}_{\dR}(M(f) \otimes M(\chi_f)\otimes M(g)\otimes M(\chi_g)(j+1)) \otimes \C,
$$
we have
\begin{equation*}
\phi^{-1}(\omega'_f \otimes \omega(\chi_f)\otimes \omega'_g \otimes \omega(\chi_g)) = 
(\alpha_f^+ e_f^+ + \alpha_f^- e_f^-)\otimes (\alpha_g^+ e_g^+ + \alpha_g^- e_g^-) \otimes  e(\chi_f) \otimes  e(\chi_g).
\end{equation*}
Let $e_f^{\pm,\vee}$ be a $K_f$-basis of $H^{k+1}_B(M(f)^\vee)^{\pm}$ with $\langle e_f^{\pm},e_f^{\pm,\vee}\rangle =1$, and let $e_g^{\pm,\vee}$ be a $K_g$-basis of $H^{\ell+1}_B(M(g)^\vee)^{\pm}$ with $\langle e_g^{\pm},e_g^{\pm,\vee}\rangle =1$. 
We have an isomorphism
$\phi_B(f) : H^{k+1}_B(M(f) \otimes M(\chi_f)) \xrightarrow{\cong} H^{k+1}_B(M(f)^\vee(-k-1))$
sending $e_f^\pm \otimes e(\chi_f)$ to a $K_{f,g}^{\times}$-rational multiple of
$(2\pi i)^{-k-1}e_f^{\mp,\vee}$.
Note that $(2\pi i)^{k+1}e(\chi_f) \in H_B^0(M(\chi_f)(k+1))^-$, since $\chi_f (-1) = (-1)^k$.
Therefore we have an isomorphism
$$
\phi_B :
H_B^{k+\ell+2}(M(f)\otimes M(g)\otimes M(\chi_f)\otimes M(\chi_g)(j+1))
\overset{\cong}{\to} H_B^{k+\ell+2}(M(f)^{\vee}\otimes M(g)^{\vee}(1-n))
$$
sending $(2 \pi i)^{j+1}e_f^{\pm}\otimes e_g^{\pm} \otimes e(\chi_f)\otimes e(\chi_g)$
to a rational multiple of $(2 \pi i)^{1-n}e_f^{\mp,\vee}\otimes e_g^{\mp,\vee}$.
Let us define
\begin{equation*}
\nu_f := \phi_B \circ \phi^{-1}
(\omega'_f \otimes \omega(\chi_f))=
 (2\pi i)^{-k-1}
(\alpha_f^+ e_f^{-,\vee}+\alpha_f^- e_f^{+,\vee})
= (2\pi i)^{-k-1}
(\alpha_f^+ e_f^{-,\vee}+\alpha_f^- e_f^{+,\vee})
\end{equation*}
and
\begin{equation*}
\nu_g :=  (2\pi i)^{-\ell-1}(\alpha_g^+ e_g^{-,\vee}+\alpha_g^- e_g^{+,\vee})  .
\end{equation*}
Also we define
\begin{equation*}
\overline{\nu_{g^*}}= \overline{F}_{\infty}^*(\nu_g) =  (2\pi i)^{-\ell-1}(-\alpha_g^+ e_g^{-,\vee}+\alpha_g^- e_g^{+,\vee}),
\end{equation*}
where $\overline{F}_{\infty}^*$ is the involution defined in \cite[1.4]{Deligne}.
We define
\begin{equation*}
\Omega := G(\overline{\chi_f}) G(\overline{\chi_g}) \nu_f \otimes \overline{\nu_{g^*}} 
\in H^{k+\ell+2}_B(M(f \otimes g)^\vee (1-n)) \otimes \C = H^{k+\ell+2}_B(M(f^*\otimes g^*)(j+1))\otimes \C.
\end{equation*}
Since $\phi_B \circ \phi^{-1}\circ \phi_{\dR}(\omega'_{f^*}\otimes \overline{\omega'_g})=
\phi_B \circ \phi^{-1}\circ \phi_{\dR}(\omega'_{f^*}\otimes \overline{F}_{\infty}^*({\omega'_{g^*}}))$ is a $K_{f,g}^\times$-rational multiple of $\nu_{f}\otimes \overline{F}_{\infty}^* (\nu_{g})=\nu_{f}\otimes \overline{\nu_{g^*}}
$, it follows that $\phi_{\dR}^{-1}\circ \phi \circ \phi_B^{-1}(\Omega)$ is a $K_{f,g}^\times$-rational multiple of
$$G(\overline{\chi_f})G(\overline{\chi_g})\omega'_{f^*}\otimes \overline{\omega'_g}
=\omega_{f^*}
\otimes \overline{\omega_g} \in H^{k+\ell+2}_{\dR}(M(f^*\otimes g^*)(j+1))\otimes \mathbb{C}.$$

\begin{lem}\label{pairing Omega}
The map
$$
\langle \, \cdot \, ,\Omega \rangle : 
H^{k+\ell+2}_B(M(f \otimes g)(n-1))^+ \otimes \R \to K_{f,g} \otimes \C
$$
factors through $H^{k+\ell+3}_{\mathcal{D}}(M(f \otimes g)(n))$.
\end{lem}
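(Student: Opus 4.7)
The plan is to reduce the claim to a single explicit vanishing identity and then verify it by direct computation in the Betti bases already set up above. First, I would invoke the exact sequence \eqref{HD 2}: the kernel of the surjection onto $H^{k+\ell+3}_{\mathcal{D}}(M(f\otimes g)(n))$ is the image of $\Fil^n H^{k+\ell+2}_{\dR}(M(f\otimes g))\otimes \R$, which is the $K_{f,g}\otimes \R$-line generated by the explicitly described element $\pi(\omega'_f\otimes \omega'_g)$. Hence the factorization we seek is equivalent to the single identity
$$\langle \pi(\omega'_f\otimes \omega'_g),\Omega\rangle=0.$$

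To verify this, I would expand $\Omega=G(\overline{\chi_f})G(\overline{\chi_g})\,\nu_f\otimes \overline{\nu_{g^*}}$ in the dual basis $\{e_f^{\epsilon,\vee}\otimes e_g^{\delta,\vee}\}_{\epsilon,\delta\in\{\pm\}}$ using the given formulas for $\nu_f$ and $\overline{\nu_{g^*}}$. Of the four components so obtained, only the two whose sign pattern matches that of $e_1=e_f^+\otimes e_g^{(-1)^{n+1}}\otimes(2\pi i)^{n-1}$ and $e_2=e_f^-\otimes e_g^{(-1)^n}\otimes(2\pi i)^{n-1}$ contribute to the pairing. A direct calculation (organised so as to treat both parities of $n$ simultaneously) then shows that the two resulting summands are both proportional to the symmetric product $\alpha_f^+\alpha_f^-\alpha_g^+\alpha_g^-$ but carry opposite signs, so they cancel. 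The decisive sign is precisely the minus in $\overline{\nu_{g^*}}=(2\pi i)^{-\ell-1}(-\alpha_g^+ e_g^{-,\vee}+\alpha_g^- e_g^{+,\vee})$, which is introduced by the $\overline{F}_\infty^*$-involution in the very definition of $\Omega$; in this sense the lemma is built into the construction of $\Omega$.

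I do not expect any serious obstacle. Once the two expansions are in hand, the proof reduces to bookkeeping: tracking the Tate twist factors $(2\pi i)^a$, the Gauss sums, the sign $(-1)^n$ that distinguishes $e_1$ from $e_2$, and the sign produced by $\overline{F}_\infty^*$. The only mildly delicate point is the need to verify the cancellation uniformly in the parity of $n$, which is most conveniently done by writing the computation symbolically with $(-1)^n$ as a parameter, or else by splitting into the two cases and checking each one.
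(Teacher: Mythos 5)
Your proposal is correct and follows essentially the same route as the paper: both reduce the factorization claim via the exact sequence \eqref{HD 2} to the single identity $\langle \pi(\omega'_f\otimes\omega'_g),\Omega\rangle=0$, and then verify it by expanding in the bases $e_f^{\pm}\otimes e_g^{\pm}$ and $e_f^{\pm,\vee}\otimes e_g^{\pm,\vee}$, where the two surviving terms are both proportional to $\alpha_f^+\alpha_f^-\alpha_g^+\alpha_g^-$ and cancel because of the relative sign $(-1)^{n+1}$ versus $(-1)^n$ coming from the minus sign in $\overline{\nu_{g^*}}$. Your identification of the $\overline{F}_\infty^*$-involution as the source of the decisive sign is an accurate reading of why the cancellation is built into the definition of $\Omega$.
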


\begin{proof}
It suffices to check that $\langle \pi(\omega'_f \otimes \omega'_g), \Omega \rangle = 0$. We have
\begin{align*}
\langle \pi(\omega'_f \otimes \omega'_g), \Omega \rangle & = \langle G(\chi_f)^{-1}G(\chi_g)^{-1}(\alpha_f^+ \alpha_g^{(-1)^{n+1}} e_f^+ \otimes e_g^{(-1)^{n+1}} + \alpha_f^- \alpha_g^{(-1)^n} e_f^- \otimes e_g^{(-1)^n}), \\
& \qquad G(\overline{\chi_f}) G(\overline{\chi_g}) (\alpha_f^+ e_f^{-,\vee}+\alpha_f^- e_f^{+,\vee}) \otimes (-\alpha_g^+ e_g^{-,\vee}+\alpha_g^- e_g^{+,\vee}) \cdot (2\pi i)^{-k-\ell-2} \rangle\\
& = \left(\alpha_f^+ \alpha_g^{(-1)^{n+1}} \alpha_f^- (-1)^{n+1} \alpha_g^{(-1)^n} + \alpha_f^- \alpha_g^{(-1)^n} \alpha_f^+ (-1)^n \alpha_g^{(-1)^{n+1}}\right) \cdot \frac{G(\overline{\chi_f})G(\overline{\chi_g})}{G(\chi_f) G(\chi_g)} (2\pi i)^{-k-\ell-2} \\
& = 0.
\end{align*}
\end{proof}

\begin{lem}\label{Poincare}
We have $\langle t, \Omega \rangle = (-1)^{n+1} \chi_f(-1)\chi_g(-1) N_{\chi_f} N_{\chi_g} (2\pi i)^{k+\ell-2j}$.
\end{lem}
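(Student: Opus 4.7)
The plan is to substitute the explicit formulas for $t$ and $\Omega$ and reduce the problem to a direct computation in the tensor product of two $2$-dimensional Betti spaces, using the pairing relations $\langle e_f^{\pm}, e_f^{\pm,\vee}\rangle = 1$ and $\langle e_f^{\pm}, e_f^{\mp,\vee}\rangle = 0$, and similarly for $g$.

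First I would unfold the definition of $t$ by substituting $e_1$, collecting the $(2\pi i)$ factors:
$$
t = G(\chi_f)G(\chi_g)(2\pi i)^{2(n-1)}(\alpha_f^-\alpha_g^{(-1)^n})^{-1}\, e_f^+\otimes e_g^{(-1)^{n+1}},
$$
while
$$
\Omega = G(\overline{\chi_f})G(\overline{\chi_g})(2\pi i)^{-k-\ell-2}(\alpha_f^+ e_f^{-,\vee}+\alpha_f^- e_f^{+,\vee})\otimes(-\alpha_g^+ e_g^{-,\vee}+\alpha_g^- e_g^{+,\vee}).
$$
Adding the exponents of $2\pi i$ and using $n = k+\ell+2-j$ immediately yields the expected factor $(2\pi i)^{k+\ell-2j}$.

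Next I would compute the Betti pairing on the tensor product. The $f$-factor contributes $\langle e_f^+,\alpha_f^+ e_f^{-,\vee}+\alpha_f^- e_f^{+,\vee}\rangle = \alpha_f^-$. For the $g$-factor, a short case-check in the parity of $n$ shows $\langle e_g^{(-1)^{n+1}}, -\alpha_g^+ e_g^{-,\vee}+\alpha_g^- e_g^{+,\vee}\rangle = (-1)^{n+1}\alpha_g^{(-1)^n}$ (when $n$ is odd, both the sign $(-1)^{n+1}$ and the power $(-1)^n$ conspire to give $+\alpha_g^-$; when $n$ is even, to give $-\alpha_g^+$). The product of these cancels perfectly against the factor $(\alpha_f^-\alpha_g^{(-1)^n})^{-1}$ appearing in $t$, leaving only the sign $(-1)^{n+1}$.

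Finally I would invoke the standard Gauss sum identity $G(\chi)G(\overline{\chi}) = \chi(-1)N_\chi$ applied to $\chi_f$ and $\chi_g$, which collects the four Gauss sums $G(\chi_f)G(\overline{\chi_f})G(\chi_g)G(\overline{\chi_g})$ into $\chi_f(-1)\chi_g(-1)N_{\chi_f}N_{\chi_g}$. Combining the three contributions gives the stated formula. There is no substantive obstacle here; the only care required is in bookkeeping the signs coming from the parity of $n$ and from the $\overline{F}_\infty^*$-involution in the definition of $\overline{\nu_{g^*}}$, together with the various Tate twists.
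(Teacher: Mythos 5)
Your proposal is correct and follows essentially the same route as the paper: substitute the explicit expressions for $t$ and $\Omega$, evaluate the Betti pairing factor by factor using $\langle e^{\pm},e^{\pm,\vee}\rangle=1$, and conclude with $G(\chi)G(\overline{\chi})=\chi(-1)N_\chi$. Your bookkeeping of the powers of $2\pi i$ (total exponent $2n-2-k-\ell-2=k+\ell-2j$) is in fact cleaner than the paper's displayed computation, which contains a typo $(2\pi i)^{k-\ell-2}$ where $(2\pi i)^{-k-\ell-2}$ is meant.
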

\begin{proof}
We have
\begin{align*}
\langle t, \Omega \rangle & = \langle G(\chi_f)G(\chi_g)(2\pi i)^{2n-2} (\alpha_f^- \alpha_g^{(-1)^n})^{-1} e_f^+ \otimes e_g^{(-1)^{n+1}},\\
&  \qquad G(\overline{\chi_f}) G(\overline{\chi_g}) (\alpha_f^+ e_f^{-,\vee}+\alpha_f^- e_f^{+,\vee}) \otimes (-\alpha_g^+ e_g^{-,\vee}+\alpha_g^- e_g^{+,\vee}) \cdot (2\pi i)^{k-\ell-2} \rangle\\
& = G(\chi_f)G(\overline{\chi_f})G(\chi_g)G(\overline{\chi_g})(2\pi i)^{k+\ell-2j} (\alpha_f^- \alpha_g^{(-1)^n})^{-1} \alpha_f^- (-1)^{n+1} \alpha_g^{(-1)^n} \\
& = (-1)^{n+1} \chi_f(-1)\chi_g(-1) N_{\chi_f} N_{\chi_g} (2\pi i)^{k+\ell-2j},
\end{align*}
since for any Dirichlet character $\chi$, we have $G(\chi) G(\overline{\chi}) = \chi(-1) N_\chi$.
\end{proof}

\section{Eisenstein symbols}\label{sec eisenstein}
Here we recall Beilinson's theory of the Eisenstein symbol \cite{Beilinson1986}.
Let $N \geq 3$ be an integer.
The complex points of $E^k$ are given by \cite[(3.4), (3.6)]{Deninger2}
$$
E^k(\C) \cong (\Z^{2k} \rtimes \SL_2(\Z)) \backslash \left(\h \times \C^k \times \GL_2(\Z/N\Z)\right).
$$
where the action of $\SL_2(\Z)$ is given by
\[
\begin{pmatrix}
a & b\\
c & d
\end{pmatrix}
\cdot (\tau; z_1,\ldots ,z_k ; h)=\left( \frac{a\tau+b}{c\tau+d}; \frac{z_1}{c\tau+d}, \ldots , \frac{z_k}{c\tau+d} ; \begin{pmatrix} a & b \\ c & d \end{pmatrix} h\right)
\]
and the action of $\Z^{2k}$ is given by
\[
(u_1,v_1, \ldots , u_k,v_k)\cdot (\tau ; z_1, \ldots , z_k ; h)
=(\tau; z_1+u_1 - v_1 \tau ,\ldots, z_k + u_k - v_k \tau ; h).
\]
Let $\varepsilon_k$ be the signature character of $\mathfrak{S}_{k+1}$ on $E^k \subset E^{k+1}$.
For $i=0,\ldots ,k$, let $q_i$ denote the composition of
$E^k \hookrightarrow E^{k+1}\overset{\mathrm{pr}_i}{\to}E$.
Denote $\mathcal{U}_N=E\setminus E[N]$, where $E[N]$ is the $N$-torsion subgroup.
Write $\mathcal{U}_N^{(i)}=q_i^{-1}(\mathcal{U}_N)$ and
$\mathcal{U}_N'=\bigcap_{i=0}^{k}\mathcal{U}_N^{(i)}\subset E^k$.

Choose $g_0,\ldots,g_k \in \mathcal{O}(\mathcal{U}_N)^{\times}$.
Denote $g=q_0^*(g_0)\cup \cdots \cup q_k^*(g_k) \in H_\mathcal{M}^{k+1}(\mathcal{U}_N',\Q (k+1))$.
Write $\widetilde{G}=(\Z\slash N\Z )^{2k}\rtimes \mathfrak{S}_{k+1}$.
Here $(\Z\slash N\Z)^{2k}\simeq E[N]^k$ acts on $E[N]$ by the natural translation.
Let $\varepsilon_k : \widetilde{G} \to \{ \pm 1\}$ be the signature character defined by
$\varepsilon (g)=\varepsilon (\sigma )=\mathrm{sign}(\sigma)$
for $g=(t,\sigma)\in \widetilde{G}=(\Z\slash N\Z )^{2k}\rtimes \mathfrak{S}_{k+1}$.
Then $\widetilde{G}$ acts on $E^k$ and $\mathcal{U}_N'$.
This induces the action of $\widetilde{G}$ on the motivic cohomology $H_{\mathcal{M}}^{k+1}(\mathcal{U}_N',\Q (k+1))$.
Denote the idempotent corresponding to $\varepsilon_k$ by $\widetilde{e}_k$.
Hence we have the $\widetilde{e}_k$-eigenspace $H_{\mathcal{M}}^{k+1}(\mathcal{U}_N',\Q (k+1))^{\widetilde{e}_k}$
and the projection
$$
\mathrm{pr}_{\widetilde{e}_k}:H_{\mathcal{M}}^{k+1}(\mathcal{U}_N',\Q (k+1))\to
H_{\mathcal{M}}^{k+1}(\mathcal{U}_N',\Q (k+1))^{\widetilde{e}_k}
$$
defined by $x\mapsto |\widetilde{G}|^{-1}\sum_{g\in \widetilde{G}}\varepsilon_k(g)g\cdot x$.

Let $M$ be a positive auxiliary integer.
Let $j:\mathcal{U}_{MN}'\hookrightarrow \mathcal{U}_N'$ be the canonical inclusion
and $[\times M]:\mathcal{U}_{MN}'\to \mathcal{U}_N'$ the multiplication by $M$.
Then $j$ and $[\times M]$ induce
$$
j^*:H_{\mathcal{M}}^{k+1}(\mathcal{U}_N',\Q (k+1))
\to H_{\mathcal{M}}^{k+1}(\mathcal{U}_{MN}',\Q (k+1))^{(\Z\slash M\Z)^{2k}}
$$
and
$$
[\times M]^*:H_{\mathcal{M}}^{k+1}(\mathcal{U}_{N}',\Q (k+1))
\overset{\sim}{\to} H_{\mathcal{M}}^{k+1}(\mathcal{U}_{MN}',\Q (k+1))^{(\Z\slash M\Z)^{2k}}.
$$
Write $[\times M^{-1}]=([\times M]^*)^{-1}\circ j^*$.
Denote by $H_{\mathcal{M}}^{k+1}(\mathcal{U}_N',\Q (k+1))^{\widetilde{e}_k}_k$ the maximal quotient of
$H_{\mathcal{M}}^{k+1}(\mathcal{U}_N',\Q (k+1))^{\widetilde{e}_k}$
such that $[\times M^{-1}]=M^{-k}$ for any $M\geq 1$.
Then we have a canonical projection
$$
\overline{\mathrm{pr}}_{\widetilde{e}_k} : 
H_{\mathcal{M}}^{k+1}(\mathcal{U}_N',\Q (k+1))\to
H_{\mathcal{M}}^{k+1}(\mathcal{U}_N',\Q (k+1))^{\widetilde{e}_k}_k .
$$
\begin{thm}[{\cite[(8.16) Theorem]{Deninger}}]
The canonical map
$$
\alpha^*:
H_{\mathcal{M}}^{k+1}(E^k,\Q (k+1))^{e_k}
=H_{\mathcal{M}}^{k+1}(E^k,\Q (k+1))^{\widetilde{e}_k}
\to H_{\mathcal{M}}^{k+1}(\mathcal{U}_N',\Q (k+1))^{\widetilde{e}_k}_k
$$
induced by $\alpha : \mathcal{U}_N'\hookrightarrow E^k$
is bijective.
\end{thm}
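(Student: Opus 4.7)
My plan is to analyze the open immersion $\alpha : \mathcal{U}_N' \hookrightarrow E^k$ via the Gysin localization long exact sequence in motivic cohomology, and to show that every boundary contribution is annihilated by the combined effect of the projector $\widetilde{e}_k$ and the passage to the maximal quotient on which $[\times M^{-1}] = M^{-k}$ for all $M$. The closed complement $Z = E^k \setminus \mathcal{U}_N'$ is the union of the smooth divisors $D_{i,t} := q_i^{-1}(\{t\})$ for $0 \leq i \leq k$ and $t \in E[N]$, each isomorphic to a $(k-1)$-fold self-product of the universal elliptic curve over $Y(N)$. Their iterated intersections $D_I$ are either empty or again smooth Kuga-Sato pieces of the same type, forming a simple normal crossings configuration. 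The associated localization spectral sequence then expresses both $\ker(\alpha^*)$ and $\operatorname{coker}(\alpha^*)$ on $H^{k+1}_{\mathcal{M}}(-,\Q(k+1))$ as subquotients of motivic cohomology groups $H^\bullet_{\mathcal{M}}(D_I, \Q(\bullet))$ carrying twists of weight strictly less than $k+1$.

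Second, I would decompose the boundary under the action of $\widetilde{G} = (\Z/N\Z)^{2k} \rtimes \mathfrak{S}_{k+1}$. The translation subgroup $(\Z/N\Z)^{2k}$ permutes the divisors $\{D_{i,t}\}_{t \in E[N]}$ transitively for each fixed $i$, while $\mathfrak{S}_{k+1}$ permutes the $k+1$ projections $q_0, \ldots, q_k$. Averaging against the signature character $\varepsilon_k$ identifies the $\widetilde{e}_k$-component of $\bigoplus_t H^\bullet_{\mathcal{M}}(D_{i,t}, \Q(\bullet))$ with a single summand and cuts down the residual symmetric-group contribution on each stratum $D_I$ to an explicit piece of motivic cohomology of a smaller Kuga-Sato variety.

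Third, I would invoke a weight computation. On a codimension-$r$ boundary stratum $D_I$, the pullback $[\times M]^*$ should act on the relevant Gysin contribution to $H^{k+1}_{\mathcal{M}}(\mathcal{U}_N',\Q(k+1))^{\widetilde{e}_k}$ by $M^{k-r}$: only $k-r$ of the $k$ fiber coordinates survive on $D_I$, and each surviving coordinate contributes one factor of $M$ via the action of $[\times M]^*$ on $H^1$ of the universal elliptic curve. Since $k-r \neq k$ whenever $r \geq 1$, passing to the maximal quotient on which $[\times M^{-1}] = M^{-k}$ for every $M \geq 1$ annihilates all of these contributions simultaneously. Combined with the previous step, this forces both $\ker(\alpha^*)^{\widetilde{e}_k}_k$ and $\operatorname{coker}(\alpha^*)^{\widetilde{e}_k}_k$ to vanish, yielding the claimed bijectivity.

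The main obstacle will be the third step: one must verify, uniformly in $r$ and in the choice of stratum, that $[\times M]^*$ indeed acts by $M^{k-r}$ on the $\widetilde{e}_k$-isotypic part of each boundary term. This hinges on the compatibility of $[\times M]^*$ with the Gysin boundary maps and on a careful identification of each $D_I$ as a Kuga-Sato piece whose residual $\widetilde{G}$-action is compatible with the weight decomposition of its motivic cohomology. Once this uniform weight computation is in place, the vanishing of $\ker$ and $\operatorname{coker}$ follows formally from the localization sequence together with the eigenspace calculations of the second step.
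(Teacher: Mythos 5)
First, a remark on scope: the paper does not prove this statement at all --- it is quoted from Deninger \cite[(8.16) Theorem]{Deninger} --- so there is no internal proof to compare against. Your outline does reproduce the broad strategy of the proof in the literature (localization for $\mathcal{U}_N'\hookrightarrow E^k$, equivariance under $\widetilde{G}$, and a weight argument for the operators $[\times M^{-1}]$), but as written it has genuine gaps. The most serious is the third step, which you yourself flag. The operator $[\times M^{-1}]$ is not pullback along an endomorphism of $\mathcal{U}_N'$: it is the composite $([\times M]^*)^{-1}\circ j^*$ through the level-$MN$ object, and the complement of $\mathcal{U}_{MN}'$ in $E^k$ is strictly larger than that of $\mathcal{U}_N'$ (it contains all $MN$-torsion fibers). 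So ``compatibility of $[\times M]^*$ with the Gysin boundary maps'' is not routine functoriality; one must compare localization sequences at different levels and keep track of the transfer. Moreover the asserted eigenvalue $M^{k-r}$ on a codimension-$r$ stratum is too naive: the Gysin isomorphism introduces a Tate twist by $r$, a stratum $D_I\cong E^{k-r}\times(\text{torsion sections})$ contributes all K\"unneth weights $0,\ldots,2(k-r)$ of $h(E^{k-r}/Y)$, and the configuration is not strictly simple normal crossings (for $k=1$ the components $q_0^{-1}(t)$ and $q_1^{-1}(-t)$ coincide, and in general the $(k+1)$-fold intersections have codimension $k$, not $k+1$). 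What is actually needed is that, after applying $\widetilde{e}_k$, the operators $[\times M^{-1}]$ satisfy a polynomial identity $\prod_w([\times M^{-1}]-M^{-w})=0$ with $w$ running over finitely many integers, of which only $w=k$ survives from the open part.

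This last point also exposes a second gap: your deduction of injectivity is not formal. Knowing that all boundary contributions die in the maximal quotient $(\cdot)_k$ does not show that $H^{k+1}_{\mathcal{M}}(E^k,\Q(k+1))^{e_k}$ injects into that quotient, because a subspace on which the operators already act by the scalar $M^{-k}$ can still lie in the kernel of the projection onto the maximal quotient unless the action is semisimple (a single nontrivial Jordan block gives a counterexample). The polynomial identity with distinct roots is precisely what yields a genuine eigenspace decomposition and identifies the maximal quotient with a direct summand; without establishing it, the argument does not close. Finally, the statement also asserts the equality $H^{k+1}_{\mathcal{M}}(E^k,\Q(k+1))^{e_k}=H^{k+1}_{\mathcal{M}}(E^k,\Q(k+1))^{\widetilde{e}_k}$, i.e.\ the agreement of the two projectors (for $(\mu_2^k\rtimes\mathfrak{S}_k)$-type symmetries versus the $\mathfrak{S}_{k+1}$-signature), which your proposal does not address at all.
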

We write $\widetilde{\mathrm{Eis}}^k(g_0,\ldots,g_k)=(\alpha^*)^{-1}(\overline{\mathrm{pr}}_{e_k}(g))
\in H^{k+1}_{\mathcal{M}}(E^k,\Q(k+1))$ for $g_0,\ldots,g_k \in \mathcal{O}(\mathcal{U}_N)^{\times}$.
In fact, $\widetilde{\mathrm{Eis}}^k$ factors through the divisors $\Q[(\Z\slash N\Z)^2]^0$.
Therefore we have a commutative diagram:
\[
\begin{CD}
\bigotimes_{i=0}^{k}\mathcal{O}(\mathcal{U}_N)^{\times}@>\widetilde{\mathrm{Eis}}^k>> H^{k+1}_{\mathcal{M}}(E^k,\Q(k+1))\\
@V\mathrm{Div}VV                                          @AA\mathrm{Eis}^k A \\
\bigotimes_{i=0}^k\Q[(\Z\slash N\Z )^2]_{\varepsilon_k}^0 @<\theta< \simeq < \Q [(\Z\slash N\Z)^2]^0
\end{CD}
\]
where $\theta$ is defined by $\beta \mapsto [\beta \otimes \alpha \otimes \cdots \otimes \alpha]$
with $\alpha =N^2[0]-\sum_{x\in (Z\slash N\Z)^{2}}[x]$.
The map
$$\mathrm{Eis}^k:\Q [(\Z\slash N\Z)^2]^0 \to H^{k+1}_{\mathcal{M}}(E^k,\Q(k+1))^{e_k}$$
is called the Eisenstein symbol.
For a smooth projective variety $X$ over $\R$, let $H^i_{\mathcal{D}}(X,\R(j))$ denote its Deligne cohomology.

We now recall an explicit formula for the realization of the Eisenstein symbol. Fix an integer $k \geq 0$. Let
$$
r_{\mathcal{D}}:H^{k+1}_{\mathcal{M}}(E^k,\Q(k+1))^{e_k}\to
H^{k+1}_{\mathcal{D}}(E^k_{\R},\R(k+1))^{e_k}
$$
be the regulator map.

By \cite[(7.3)]{Nekovar}, the Deligne cohomology group is given by:
$$
H^{k+1}_{\mathcal{D}}(E^k_{\R},\R(k+1))\simeq
\frac{\{\varphi \in H^0(E^k_{\R,\mathrm{an}}, \mathcal{A}^k\otimes \R(k)) \,| \, d\varphi=\frac12 (\omega+(-1)^k \overline{\omega}), \omega \in \Omega^{k+1}(\overline{\overline{E}}^k) \langle D \rangle)\} }{dH^0(E^k_{\R,\mathrm{an}},\mathcal{A}^{k-1}\otimes \R(k))},
$$
where $\mathcal{A}^{\cdot}$ is the de Rham complex of real valued $C^{\infty}$-forms, $\overline{\overline{E}}^k$ is a smooth compactification of $E^k(\C)$ and $D=\overline{\overline{E}}^k \setminus E^k(\C)$.

Recall that
$$
E^k(\C) \cong (\Z^{2k} \rtimes \SL_2(\Z)) \backslash \left(\h \times \C^k \times \GL_2(\Z/N\Z)\right).
$$
Write $\tau$ (resp. $z_1,\ldots,z_k$) for the coordinate on $\h$ (resp. $\C^k$). Write $h$ for an element of $\GL_2(\Z/N\Z)$. For any integer $0 \leq j \leq k$, define
$$
\psi_{k,j} = \frac{1}{k!} \sum_{\sigma \in \mathfrak{S}_k} \varepsilon(\sigma) d\overline{z}_{\sigma(1)} \wedge \cdots \wedge d\overline{z}_{\sigma(j)} \wedge dz_{\sigma(j+1)} \wedge \cdots \wedge dz_{\sigma(k)}.
$$
Let $\beta \in \Q[(\Z\slash N\Z)^2]^0$. Then by \cite[(3.12), (3.28)]{Deninger2} and \cite[Remark after Lemma 7.1]{Huber-Kings}, $r_{\mathcal{D}}(\mathrm{Eis}^k(\beta))$ is represented by
$$
\Phi^k(\beta) :=
-\frac{k!(k+2)}{N(2\pi i)}\cdot \frac{\tau-\overline{\tau}}{2} \sum_{a=0}^k \psi_{k,a} \cdot \left(\sideset{}{'}\sum_{(c,d) \in \Z^2} \sum_{v \in (\Z/N\Z)^2} \frac{\beta(h^{-1} v) \cdot e^{\frac{2\pi i(cv_1+dv_2)}{N}}}{(c \tau +d)^{k+1-a}(c \overline{\tau}+d)^{a+1}}\right) \mod{d\tau,d\overline{\tau}}
$$
where $\sum'$ denotes that we omit the term $(c,d)=(0,0)$. For brevity, for any $a,b \geq 1$ we put
$$
\mathcal{E}^{a,b}_{\beta}(\tau,h) := \sideset{}{'}\sum_{(c,d) \in \Z^2} \sum_{v \in (\Z/N\Z)^2} \frac{\beta(h^{-1} v) \cdot e^{\frac{2\pi i(cv_1+dv_2)}{N}}}{(c \tau +d)^a (c \overline{\tau}+d)^b}.
$$

\section{Construction of elements in the motivic cohomology}\label{construction}

Let $k,\ell$ be non-negative integers with $k\leq \ell$ and choose an integer $j$ such that $0\leq j \leq k$.
Write $k'=k-j \geq 0$ and $\ell' = \ell -j \geq 0$.
Consider the following three morphisms:
\begin{enumerate}
\item $p:E^{k'+j+\ell'}\to E^{k'+\ell'}$ given by 
$$(\tau;u_1,\ldots,u_{k'},t_1,\ldots,t_j,v_1,\ldots,v_{\ell'}; h)
\mapsto (\tau;u_1,\ldots,u_{k'},v_1,\ldots,v_{\ell'}; h).
$$
\item $\Delta :E^{k'+j+\ell'}\to E^{k'+2j+\ell'}=E^{k+\ell}$ given by 
$$
(\tau;u_1,\ldots,u_{k'},t_1,\ldots,t_j,v_1,\ldots,v_{\ell'}; h)
\mapsto (\tau;u_1,\ldots,u_{k'},t_1,\ldots,t_j,t_1,\ldots, t_j,v_1,\ldots,v_{\ell'}; h).
$$
\item $i:E^{k'+2j+\ell'}=E^{k+\ell} \to E^k\times E^\ell$ given by 
$$
(\tau;u_1,\ldots,u_{k'},t_1,\ldots,t_j,t_1',\ldots, t_j',v_1,\ldots,v_{\ell'}; h) \mapsto
((\tau;u_1,\ldots,u_{k'},t_1,\ldots,t_j ; h),(\tau;t_1',\ldots,t_j',v_1,\ldots,v_{\ell'} ; h)).
$$
\end{enumerate}
Note that $(i\circ \Delta)(\tau; u,t,v; h)=((\tau;u,t; h),(\tau ;t,v; h))$.

\begin{defn}
For $\beta \in \Q[(\Z\slash N\Z)^2]^0$, denote by $\Xi^{k,\ell,j}(\beta)$
the image of $\mathrm{Eis}^{k'+\ell'}(\beta)$ under the composite of morphisms:
\begin{align*}
H^{k'+\ell'+1}_{\mathcal{M}}(E^{k'+\ell'},\Q (k'+\ell'+1))
&\overset{p^*}{\longrightarrow}
H^{k'+\ell'+1}_{\mathcal{M}}(E^{k'+j+\ell'},\Q (k'+\ell'+1))\\
&\overset{\Delta_*}{\longrightarrow}
H^{k+\ell+1}_{\mathcal{M}}(E^{k+\ell},\Q (k+\ell-j+1))\\
&\overset{i_*}{\longrightarrow}
H^{k+\ell+3}_{\mathcal{M}}(E^{k} \times E^\ell,\Q (k+\ell-j+2)).
\end{align*}
\end{defn}
%

\section{The Rankin-Selberg method}\label{Rankin-Selberg}

Let $L(f \otimes g,s)$ denote the $L$-function associated to the $4$-dimensional Galois representation $V_f \otimes V_g$. We have
$$
L(f \otimes g,s) = \prod_{p \textrm{ prime}} P_p(f \otimes g,s)^{-1},
$$
where $P_p(f \otimes g,s)=\mathrm{det}(1-\mathrm{Frob}_p \cdot p^{-s}|(V_f\otimes V_g)^{I_p})$
is a polynomial in $p^{-s}$.
Then the polynomial $P_p(f\otimes g,s)$ coincides up to the shift $s \mapsto s-\frac{k+\ell+2}{2}$ with the automorphic $L$-factor defined by Jacquet in \cite{Jacquet}, and $L(f \otimes g,s)$ converges for $\mathrm{Re}(s)>\frac{k+\ell}{2}+2$.

Let $N$ be an integer divisible by $N_f$ and $N_g$. Let $\chi : (\Z/N\Z)^\times \to \C^\times$ be the Dirichlet character induced by $\chi_f \chi_g$. Put $D(f,g,s):= \sum_{n=1}^{\infty} a_n(f) a_n(g) n^{-s}$. By \cite[Lemma 1]{Shimura}, we have
$$
L(\chi,2s-k-\ell-2) D(f,g,s) = R_{f,g,N}(s) L(f \otimes g,s),
$$
where
$$
R_{f,g,N}(s) := \left(\prod_{p | N} P_p(f \otimes g, s) \right) \sum_{n \in S(N)} \frac{a_n(f) a_n(g)}{n^s}
$$
is a polynomial in the variables $p^{-s}$ for $p|N$ by \cite[Theorem 15.1]{Jacquet}. Here $S(N)$ denotes the set of integers all of whose prime factors divide $N$.

For any Dirichlet character $\omega : (\Z/N\Z)^\times \to \C^\times$, define the Eisenstein series
$$
E_{\ell-k,N}(\tau,s,\omega) = \sideset{}{'}\sum_{m,n \in \Z} \frac{\omega(n)}{(Nm\tau+n)^{\ell-k} |Nm\tau+n|^{2s}}.
$$


\begin{thm}[Shimura {\cite[(2.4)]{Shimura}}]\label{thm shimura}
We have
$$
\int_{\Gamma_0(N) \backslash \mathcal{H}} f(\tau) g(-\overline{\tau}) E_{\ell-k,N}(\tau,s-1-\ell,\chi) y^{s-1} dx dy = 2 (4\pi)^{-s} \Gamma(s) L(\chi,2s-k-\ell-2) D(f,g,s).
$$
\end{thm}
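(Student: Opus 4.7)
My plan is to follow the classical Rankin--Selberg unfolding method, adapted to the nebentypus setting here. I would proceed in three steps.

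\textbf{Step 1: Unfolding the Eisenstein series.} First I would group the terms of $E_{\ell-k,N}(\tau,s,\chi)$ by the greatest common divisor. Writing $(m,n) = t(m_0,n_0)$ with $t \geq 1$ and $\gcd(m_0,n_0)=1$, and using $\chi(tn_0) = \chi(t)\chi(n_0)$ when $(t,N)=1$ (zero otherwise), the sum factors as
\begin{equation*}
E_{\ell-k,N}(\tau,s,\chi) = 2\,L(\chi,\ell-k+2s)\!\!\!\sum_{\gamma \in \Gamma_\infty\backslash\Gamma_0(N)}\!\!\! \chi(d_\gamma)\,(c_\gamma\tau+d_\gamma)^{-(\ell-k)}\,|c_\gamma\tau+d_\gamma|^{-2s}.
\end{equation*}
The factor $2$ appears because $\pm I \in \Gamma_\infty$ while the primitive sum treats $(m_0,n_0)$ and $(-m_0,-n_0)$ separately; the required parity $\chi(-1)(-1)^{\ell-k}=1$ follows from $\chi_f(-1)=(-1)^k$ and $\chi_g(-1)=(-1)^\ell$. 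Specializing $s \mapsto s-1-\ell$ turns the Dirichlet $L$-value into $L(\chi,2s-k-\ell-2)$.

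\textbf{Step 2: Unfolding the integral.} Next I would check by direct computation of automorphy factors that $f(\tau)\,g(-\overline{\tau})\,E_{\ell-k,N}(\tau,s-1-\ell,\chi)\,y^{s+1}\,\tfrac{dx\,dy}{y^2}$ is $\Gamma_0(N)$-invariant: the characters cancel via $\chi_f\chi_g\overline{\chi}=1$, and the $(c\tau+d)$- and $(c\overline{\tau}+d)$-factors coming from $f$, $g(-\overline{\tau})$, $E$, and $y^{s+1}$ combine to $1$. Substituting the unfolded expression for $E$ and exchanging sum and integral (legitimate for $\mathrm{Re}(s)$ large by absolute convergence) collapses the coset sum against the quotient via the standard identity $\int_{\Gamma_0(N)\backslash\h}\sum_\gamma \phi(\gamma\tau) = \int_{\Gamma_\infty\backslash\h}\phi$, yielding
\begin{equation*}
2\,L(\chi,2s-k-\ell-2)\int_{\Gamma_\infty\backslash\h} f(\tau)\,g(-\overline{\tau})\,y^{s-1}\,dx\,dy,
\end{equation*}
since the term corresponding to the identity coset reduces to $1$.

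\textbf{Step 3: Evaluating the strip integral.} Finally I would parametrize $\Gamma_\infty\backslash\h$ by $[0,1)\times(0,\infty)$ and compute using the Fourier expansions
\begin{equation*}
f(\tau)\,g(-\overline{\tau}) = \sum_{m,n\geq 1} a_m(f)\,a_n(g)\,e^{2\pi i(m-n)x}\,e^{-2\pi(m+n)y}.
\end{equation*}
The $x$-integral picks out $m=n$, leaving $\sum_{n\geq 1} a_n(f)a_n(g)\,e^{-4\pi n y}$, and the remaining $y$-integral is a Mellin transform producing $(4\pi)^{-s}\Gamma(s)\,D(f,g,s)$. Multiplying by the $L$-factor from Step~1 gives the claim; the formula is initially valid for $\mathrm{Re}(s)$ large and extends by meromorphic continuation. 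The main subtlety I expect is simply keeping careful track of the factor of $2$ and the parity condition in Step~1; the remaining computations are the standard Rankin--Selberg machinery.
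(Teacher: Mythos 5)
Your proposal is correct: the unfolding of $E_{\ell-k,N}$ via the gcd decomposition (with the parity check $\chi(-1)(-1)^{\ell-k}=1$ supplying the factor $2$), the verification that $f(\tau)g(-\overline{\tau})E_{\ell-k,N}(\tau,s-1-\ell,\chi)y^{s+1}\,dx\,dy/y^2$ is $\Gamma_0(N)$-invariant, and the final Mellin transform all check out and reproduce the stated identity. The paper gives no proof of its own here — it simply cites Shimura's formula (2.4) — and your argument is precisely the standard Rankin--Selberg unfolding that underlies that reference, so there is nothing to compare beyond noting agreement.
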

\begin{rem}
Let us assume $k=\ell$. Then by \cite[(2.5)]{Shimura} and \cite[page 220, Correction]{Shimura2}, $D(f,g,s)$ has a pole at $s=k+2$ if and only if $\langle {f}^*,g\rangle \neq 0$.
This is equivalent to $g={f}^*$. In this case, we have $\chi_g =\chi_f^{-1}$, hence $\chi$ is trivial.
Therefore, our assumption $\chi \neq 1$ excludes the case where $L(f\otimes g,s)$ has a pole.
\end{rem}

\section{Computation of the regulator integral}\label{computation}

Let $j$ be an integer satisfying $0 \leq j \leq k \leq \ell$. Recall that we have a differential form $\Omega_{f,g} := \omega_{f^*} \otimes \overline{\omega_g} \in H^{k+\ell+2}_{\dR}(M(f^*\otimes g^*)(j+1))\otimes \mathbb{C}$. Since $M(f^* \otimes g^*)$ is a direct factor of $h^{k+\ell+2}(E^k \times E^\ell) \otimes K_{f,g}$, we may consider $\Omega_{f,g}$ as an element of $H^{k+\ell+2}_{\dR}(E^k \times E^\ell) \otimes K_{f,g} \otimes \C$. By the same argument as in Lemma \ref{pairing Omega}, and since $\Omega_{f,g}$ has rapid decay at infinity, pairing with $\Omega_{f,g}$ yields a linear map
$$
\langle \, \cdot \, ,\Omega_{f,g} \rangle : H^{k+\ell+3}_{\mathcal{D}}(E^k_{\R} \times E^{\ell}_{\R}, \mathbb{R}(n)) \to K_{f,g} \otimes \C.
$$

Let $\beta \in \Q[(\Z/N\Z)^2]^0$. In this section, we compute $\langle r_{\mathcal{D}}(\Xi^{k,\ell,j}(\beta)), \Omega_{f,g} \rangle$ in terms of the Rankin-Selberg $L$-function of $f$ and $g$. At the beginning $\beta$ is arbitrary, but from Definition \ref{def betachi} on, we will use a particular choice of $\beta$.

\begin{lem}\label{lem rDXi}
We have
$$
\langle r_{\mathcal{D}}(\Xi^{k,\ell,j}(\beta)), \Omega_{f,g} \rangle = \frac{1}{(2\pi i)^{k'+j+\ell'+1}} \int_{E^{k'+j+\ell'}(\C)} p^* \Phi^{k'+\ell'}(\beta) \wedge \Delta^* i^* \Omega_{f,g}.
$$
\end{lem}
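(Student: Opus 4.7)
The plan is to combine three ingredients: the functoriality of the regulator, the projection formula for proper pushforwards, and an explicit integral formula for the pairing with $\Omega_{f,g}$.

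First, because $r_{\mathcal{D}}$ is a morphism of twisted Poincar\'e duality theories, it commutes with the smooth pull-back $p^{*}$ and with the Gysin push-forwards $\Delta_{*}$ and $i_{*}$. Applied to the defining formula $\Xi^{k,\ell,j}(\beta)=i_{*}\Delta_{*}p^{*}\,\mathrm{Eis}^{k'+\ell'}(\beta)$ and to the identity $r_{\mathcal{D}}(\mathrm{Eis}^{k'+\ell'}(\beta))=\Phi^{k'+\ell'}(\beta)$ recalled in Section~\ref{sec eisenstein}, this yields
\[
r_{\mathcal{D}}(\Xi^{k,\ell,j}(\beta))=i_{*}\Delta_{*}p^{*}\Phi^{k'+\ell'}(\beta)\in H^{k+\ell+3}_{\mathcal{D}}(E^{k}_{\R}\times E^{\ell}_{\R},\R(n)).
\]

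Second, I would make explicit the pairing $\langle\cdot,\Omega_{f,g}\rangle$ at the level of currents. Using the exact sequence (\ref{HD 2}) applied to the whole variety $E^{k}\times E^{\ell}$, a Deligne class in $H^{k+\ell+3}_{\mathcal{D}}(E^{k}_{\R}\times E^{\ell}_{\R},\R(n))$ lifts to a real Betti class in $H^{k+\ell+2}_{B}(E^{k}\times E^{\ell},\R(n-1))^{+}$, and Poincar\'e duality on $E^{k}\times E^{\ell}$ combined with the Betti/de Rham comparison isomorphism (whose Tate twist contributes a factor $(2\pi i)^{n-1}$) identifies the pairing on a differential-form representative $\varphi$ as
\[
\langle\varphi,\Omega_{f,g}\rangle=\frac{1}{(2\pi i)^{n-1}}\int_{E^{k}(\C)\times E^{\ell}(\C)}\varphi\wedge\Omega_{f,g}.
\]
Since $n-1=k'+j+\ell'+1$, this produces the prefactor appearing in the statement. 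Convergence of the integral is guaranteed because $\Omega_{f,g}=\omega_{f^{*}}\otimes\overline{\omega_{g}}$ decays rapidly at the cusps.

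Third, the projection formula for the proper maps $\Delta$ and $i$ gives
\[
\int_{E^{k}(\C)\times E^{\ell}(\C)}i_{*}\Delta_{*}p^{*}\Phi^{k'+\ell'}(\beta)\wedge\Omega_{f,g}=\int_{E^{k'+j+\ell'}(\C)}p^{*}\Phi^{k'+\ell'}(\beta)\wedge\Delta^{*}i^{*}\Omega_{f,g},
\]
and combining the three steps proves the lemma.

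The main obstacle is Step 2. The current $\Phi^{k'+\ell'}(\beta)$ representing $r_{\mathcal{D}}(\mathrm{Eis}^{k'+\ell'}(\beta))$ is given by conditionally convergent Eisenstein-type series on the open Kuga--Sato variety $E^{k'+\ell'}$, not on its smooth compactification, so one must check that the pairing against $\Omega_{f,g}$ is well defined by the single integral above and that every power of $2\pi i$ coming from the rational structures of Betti and Deligne cohomology is accounted for. The cuspidality of $f$ and $g$ is essential here, both to guarantee convergence at the cusps and to ensure that $\Omega_{f,g}$ actually lies in the subspace pairing trivially with $\mathrm{Fil}^{n}$, so that the integral depends only on the Deligne class and not on the chosen form representative.
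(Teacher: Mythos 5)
Your proposal is correct and follows essentially the same route as the paper, which proves the lemma by the same three moves in a four-line chain of equalities: adjunction of the Gysin push-forwards $\Delta_*, i_*$ against pull-back on the $\Omega_{f,g}$ side, the integral formula for the Deligne pairing, and compatibility of $r_{\mathcal{D}}$ with $p^*$ together with $r_{\mathcal{D}}(\mathrm{Eis}^{k'+\ell'}(\beta))=\Phi^{k'+\ell'}(\beta)$. The only cosmetic difference is that the paper applies the adjunction first and then writes the integral directly on $E^{k'+j+\ell'}$ with the dimension-based prefactor $(2\pi i)^{-\dim E^{k'+j+\ell'}}$, which coincides with your $(2\pi i)^{-(n-1)}$ since $\dim E^{k'+j+\ell'}=k'+j+\ell'+1=n-1$.
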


\begin{proof}
We have
\begin{align*}
\langle r_{\mathcal{D}}(\Xi^{k,\ell,j}(\beta)), \Omega_{f,g} \rangle & = \langle r_{\mathcal{D}}(i_* \Delta_* p^* \mathrm{Eis}^{k'+\ell'}(\beta)), \Omega_{f,g} \rangle\\
& = \langle r_{\mathcal{D}}(p^* \mathrm{Eis}^{k'+\ell'}(\beta)), \Delta^* i^* \Omega_{f,g} \rangle\\
& = \frac{1}{(2\pi i)^{\mathrm{dim}E^{k'+j+\ell'}}} \int_{E^{k'+j+\ell'}(\C)} p^* r_{\mathcal{D}}( \mathrm{Eis}^{k'+\ell'}(\beta)) \wedge \Delta^* i^* \Omega_{f,g}\\
& = \frac{1}{(2\pi i)^{k'+j+\ell'+1}} \int_{E^{k'+j+\ell'}(\C)} p^* \Phi^{k'+\ell'}(\beta) \wedge \Delta^* i^* \Omega_{f,g}.
\end{align*}
\end{proof}

Let $\tau, z_1,\ldots,z_{k+\ell-j}$ denote the coordinates on $E^{k+\ell-j}(\C)$.
%
Note that the differential form
$$
\Delta^* i^* \Omega_{f,g} = (-1)^{k+\ell+1} (2\pi i)^{k+\ell+2} f^*(\tau) \overline{g(\tau)} d\tau \wedge d\overline{\tau} \wedge dz_1 \wedge \cdots \wedge dz_k  \wedge d\overline{z}_{k-j+1} \wedge \cdots \wedge d\overline{z}_{k+\ell-j}
$$
already contains $d\tau \wedge d\overline{\tau}$. Therefore, we may neglect the terms of $\Phi^{k'+\ell'}(\beta)$ involving $d\tau,d\overline{\tau}$. Moreover, we have
$$
p^* \psi_{k'+\ell',a} \wedge \Delta^* i^* \Omega_{f,g} = \begin{cases} C_1 f^*(\tau) \overline{g(\tau)} d\tau \wedge d\overline{\tau} \wedge \bigwedge_{i=1}^{k+\ell-j} dz_i \wedge d\overline{z}_i & \textrm{if } a=k',\\
0 & \textrm{if } a \neq k',
\end{cases}
$$
with
$$
C_1 = (-1)^{k+\ell+1+{k'}^2+j(k'+\ell')+(k'+j+\ell')(k'+j+\ell'-1)/2} \frac{{k'}! \cdot {\ell'}!}{(k'+\ell')!} (2\pi i)^{k+\ell+2}.
$$
It follows that
\begin{align*}
& p^* \Phi^{k'+\ell'}(\beta) \wedge \Delta^* i^* \Omega_{f,g} \\
& = -\frac{(k'+\ell')!\cdot (k'+\ell'+2)}{N(2\pi i)} \cdot \frac{\tau-\overline{\tau}}{2} \cdot \mathcal{E}^{\ell'+1,k'+1}_{\beta}(\tau,h) \cdot p^*\psi_{k'+\ell',k'} \wedge \Delta^* i^* \Omega_{f,g}\\
& = -\frac{C_1 \cdot (k'+\ell')!\cdot ( k'+\ell'+2)}{N(2\pi i)} \cdot \frac{\tau-\overline{\tau}}{2} \cdot \mathcal{E}^{\ell'+1,k'+1}_{\beta}(\tau,h) \cdot f^*(\tau) \overline{g(\tau)} d\tau \wedge d\overline{\tau} \wedge \bigwedge_{i=1}^{k+\ell-j} dz_i \wedge d\overline{z}_i.
\end{align*}
Recall \cite[(3.4)]{Deninger2} that the complex points of $Y(N)$ are given by
$$
Y(N)(\C) = \SL_2(\Z) \backslash (\h \times \GL_2(\Z/N\Z)).
$$
Note that $\int_{\C / (\Z+\tau \Z)} dz \wedge d\overline{z} = -2i \mathrm{Im}(\tau)$. Using Lemma \ref{lem rDXi} and integrating over the fibers of $E^{k'+j+\ell'}$ over $Y(N)$, we get
$$
\langle r_{\mathcal{D}}(\Xi^{k,\ell,j}(\beta)), \Omega_{f,g} \rangle = -
\frac{(-2i)^{k+\ell-j}\cdot i \cdot C_1\cdot (k'+\ell'+2)!}{(2 \pi i)^{k+\ell-j+2}\cdot N\cdot (k'+\ell'+1)} \int_{Y(N)(\C)} f^*(\tau) \overline{g(\tau)} \mathcal{E}^{\ell'+1,k'+1}_{\beta}(\tau,h) \mathrm{Im}(\tau)^{k+\ell-j+1} d\tau \wedge d\overline{\tau}.
$$
We have an isomorphism of analytic spaces
\begin{align*}
\nu : (\Z/N\Z)^\times \times \Gamma(N) \backslash \h & \xrightarrow{\cong} Y(N)(\C)\\
(a,[\tau]) & \mapsto \left[\left(\tau, \begin{pmatrix} 0 & -1 \\ a & 0\end{pmatrix}\right)\right].
\end{align*}
Note that $\nu(a,\tau)$ corresponds to the elliptic curve $\C/(\Z+\tau \Z)$ with basis of $N$-torsion $(a\tau/N, 1/N)$ in the moduli space.

Let $\chi : (\Z/N\Z)^\times \to \C^\times$ be the Dirichlet character induced by $\chi_f \chi_g$. \emph{Assume $\chi \neq 1$}.

\begin{defn}\label{def betachi}
Let $\beta_\chi \in \Q(\chi)[(\Z/N\Z)^2]^0 \subset K_{f,g}[(\Z/N\Z)^2]^0$ be the divisor defined by
$$
\beta_\chi(v_1,v_2) = \begin{cases} \overline{\chi}(-v_2) & \textrm{if } v_1 = 0,\\
0 & \textrm{if } v_1 \neq 0.
\end{cases}
$$
\end{defn}

For an integer $w \geq 0$, $\alpha \in \Q/\Z$, $\tau \in \h$ and $s \in \C$, define the following standard real-analytic Eisenstein series as in \cite[Definition 4.2.1]{LLZ}:
$$
E^{(w)}_\alpha(\tau,s)=(-2\pi i)^{-w} \pi^{-s} \Gamma(s+w) \sideset{}{'}\sum_{m,n \in \Z} \frac{\mathrm{Im}(\tau)^s}{(m\tau+n+\alpha)^w |m\tau+n+\alpha|^{2s}},
$$
where $\sideset{}{'}\sum$ denotes that the term $(m,n)=(0,0)$ is omitted if $\alpha=0$, and
$$
F^{(w)}_\alpha(\tau,s)=(-2\pi i)^{-w} \pi^{-s} \Gamma(s+w) \sideset{}{'}\sum_{m,n \in \Z} \frac{e^{2\pi i \alpha m}\mathrm{Im}(\tau)^s}{(m\tau+n)^w |m\tau+n|^{2s}},
$$
where $\sideset{}{'}\sum$ denotes that the term $(m,n)=(0,0)$ is omitted. For fixed $w,\alpha,\tau$, these functions have meromorphic continuations to the whole $s$-plane, and are holomorphic everywhere if $w \neq 0$. Note that
$$
\sum_{\alpha \in (\Z/N\Z)^\times} \omega(\alpha) E^{(\ell-k)}_{\alpha/N}(\tau,s) = (-2\pi i)^{-\ell+k} \pi^{-s} \Gamma(s+\ell-k) \mathrm{Im}(\tau)^s N^{\ell-k+2s} E_{\ell-k,N}(\tau,s,\omega).
$$

\begin{lem}\label{lemma Ebetachi}
For any $a \in (\Z/N\Z)^\times$, we have
\begin{equation}\label{lem Ebetachi}
\mathcal{E}_{\beta_\chi}^{\ell'+1,k'+1}\left(\tau,\begin{pmatrix} 0 & -1 \\ a & 0 \end{pmatrix}\right) = \frac{\pi^{k'+\ell'+1} }{\ell'! N^{k'+\ell'} \cdot \mathrm{Im}(\tau)^{k'+\ell'+1}} \lim_{s \to -\ell'} \Gamma(s+\ell-k) E_{\ell-k,N}(\tau,s,\overline{\chi}).
\end{equation}
\end{lem}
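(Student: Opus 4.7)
The plan is to unfold the sum using the explicit form of $h=\begin{pmatrix}0 & -1\\ a & 0\end{pmatrix}$, identify the result with a twisted real-analytic Eisenstein series of type $F$, and then apply the functional equation of Eisenstein series together with the identity relating $\sum_\alpha \omega(\alpha) E^{(\ell-k)}_{\alpha/N}$ to $E_{\ell-k,N}(\tau,s,\omega)$ displayed just above.

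First I would compute $h^{-1}v=(a^{-1}v_2,-v_1)^T \pmod{N}$ for a column vector $v=(v_1,v_2)^T$. The support condition on $\beta_\chi$ (first coordinate zero) forces $v_2=0$, whence $\beta_\chi(h^{-1}v)=\overline{\chi}(v_1)$. Substituting into the definition of $\mathcal{E}^{a,b}_\beta(\tau,h)$ (the exponential $e^{2\pi i(cv_1+dv_2)/N}$ collapses to $e^{2\pi i cv_1/N}$) yields
\[
\mathcal{E}_{\beta_\chi}^{\ell'+1,k'+1}(\tau,h)=\sideset{}{'}\sum_{(c,d)\in\Z^2}\sum_{v_1\in(\Z/N\Z)^\times}\frac{\overline{\chi}(v_1)\,e^{2\pi i cv_1/N}}{(c\tau+d)^{\ell'+1}(c\overline{\tau}+d)^{k'+1}},
\]
which is manifestly independent of $a$, as required by the statement; the $c=0$ terms drop out because $\chi\neq 1$.

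Next, I would rewrite the denominator as $(c\tau+d)^{\ell-k}|c\tau+d|^{2(k'+1)}$ and compare with the series defining $F^{(w)}_\alpha(\tau,s)$ for $w=\ell-k$, $\alpha=v_1/N$, and $s=k'+1$, using $\Gamma(s+w)=\Gamma(\ell'+1)=\ell'!$. This produces
\[
\mathcal{E}_{\beta_\chi}^{\ell'+1,k'+1}(\tau,h)=\frac{(-2\pi i)^{\ell-k}\,\pi^{k'+1}}{\ell'!\,\mathrm{Im}(\tau)^{k'+1}}\sum_{v_1\in(\Z/N\Z)^\times}\overline{\chi}(v_1)\,F^{(\ell-k)}_{v_1/N}(\tau,k'+1).
\]
The crucial step is the functional equation for twisted real-analytic Eisenstein series, obtained by a Poisson summation argument: as meromorphic functions of $s$,
\[
\sum_{v_1}\overline{\chi}(v_1)\,F^{(w)}_{v_1/N}(\tau,s)=\sum_{v_1}\overline{\chi}(v_1)\,E^{(w)}_{v_1/N}(\tau,1-w-s).
\]
At $s=k'+1$ and $w=\ell-k$, the dual value is $1-w-s=-\ell'$.

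Finally, I would invoke the identity displayed just above the lemma (with $\omega=\overline{\chi}$) at $s'=-\ell'$, namely
\[
\sum_{v_1}\overline{\chi}(v_1)\,E^{(\ell-k)}_{v_1/N}(\tau,s')=(-2\pi i)^{-(\ell-k)}\,\pi^{-s'}\,\Gamma(s'+\ell-k)\,\mathrm{Im}(\tau)^{s'}\,N^{\ell-k+2s'}\,E_{\ell-k,N}(\tau,s',\overline{\chi}),
\]
noting $\ell-k+2s'=-(k'+\ell')$. Collecting constants — the $(-2\pi i)^{\ell-k}$ factors cancel, the $\pi$ exponents sum to $k'+\ell'+1$, the $\mathrm{Im}(\tau)$ exponents combine to $-(k'+\ell'+1)$, and $N^{-(k'+\ell')}$ appears in the denominator — yields the claimed formula. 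The limit $s\to-\ell'$ is finite because the simple pole of $\Gamma(s+\ell-k)$ at $s=-\ell'$ is compensated by a simple zero of $E_{\ell-k,N}(\tau,s,\overline{\chi})$ there. The main obstacle is justifying the Eisenstein functional equation, which requires Poisson summation at the boundary of absolute convergence together with careful handling of the analytic continuation near $s=-\ell'$; the rest is routine bookkeeping of constants.
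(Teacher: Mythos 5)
Your proposal follows the paper's proof essentially step for step: unfold the sum using $h^{-1}v=(a^{-1}v_2,-v_1)$ and the support condition on $\beta_\chi$, recognize the result as $\frac{(-2\pi i)^{\ell-k}\pi^{k'+1}}{\ell'!\,\mathrm{Im}(\tau)^{k'+1}}\sum_{v_1}\overline{\chi}(v_1)F^{(\ell-k)}_{v_1/N}(\tau,k'+1)$, pass to $E^{(\ell-k)}_{v_1/N}(\tau,-\ell')$ via the $F\leftrightarrow E$ functional equation, and finish with the displayed identity relating the twisted sum of $E^{(\ell-k)}_{\alpha/N}$ to $E_{\ell-k,N}(\tau,s,\overline{\chi})$; all your constants and exponents match. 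The only cosmetic difference is that the paper simply cites Lei--Loeffler--Zerbes, Lemma 4.2.2(iv), for the functional equation rather than rederiving it by Poisson summation as you propose.
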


\begin{proof}
We have
\begin{align*}
\mathcal{E}_{\beta_\chi}^{\ell'+1,k'+1}\left(\tau,\begin{pmatrix} 0 & -1 \\ a & 0 \end{pmatrix}\right) & = \sideset{}{'}\sum_{(c,d) \in \Z^2} \sum_{v_1,v_2 \in \Z/N\Z} \frac{\beta_\chi(a^{-1} v_2,-v_1) \cdot e^{\frac{2\pi i(cv_1+dv_2)}{N}}}{(c \tau +d)^{\ell'+1} (c \overline{\tau}+d)^{k'+1}}\\
& = \sideset{}{'}\sum_{(c,d) \in \Z^2} \sum_{v_1 \in (\Z/N\Z)^\times} \frac{\overline{\chi}(v_1) \cdot e^{\frac{2\pi i cv_1}{N}}}{(c \tau +d)^{\ell'+1} (c \overline{\tau}+d)^{k'+1}}\\
& = \frac{(-2\pi i)^{\ell-k} \pi^{k'+1}}{\ell'! \cdot \mathrm{Im}(\tau)^{k'+1}} \sum_{v_1 \in (\Z/N\Z)^\times} \overline{\chi}(v_1) F^{(\ell-k)}_{v_1/N}(\tau,k'+1)\\
& = \frac{(-2\pi i)^{\ell-k} \pi^{k'+1}}{\ell'! \cdot \mathrm{Im}(\tau)^{k'+1}} \sum_{v_1 \in (\Z/N\Z)^\times} \overline{\chi}(v_1) E^{(\ell-k)}_{v_1/N}(\tau,-\ell') \qquad \textrm{by \cite[4.2.2(iv)]{LLZ}}  \\
& = \frac{\pi^{k'+\ell'+1} }{\ell'! N^{k'+\ell'} \cdot \mathrm{Im}(\tau)^{k'+\ell'+1}} \lim_{s \to -\ell'} \Gamma(s+\ell-k) E_{\ell-k,N}(\tau,s,\overline{\chi}).
\end{align*}
\end{proof}

Note that the right hand side of (\ref{lem Ebetachi}) is independent of $a \in (\Z/N\Z)^\times$. Therefore, the contributions of the regulator integral over each connected component of $Y(N)(\C)$ are equal, and we have
$$
\langle r_{\mathcal{D}}(\Xi^{k,\ell,j}(\beta_\chi)), \Omega_{f,g} \rangle =
\frac{C_2 \cdot \phi(N)}{(2\pi i)^{k+\ell-j+1}} \int_{\Gamma(N) \backslash \h} f^*(\tau) g^*(-\overline{\tau})  \mathrm{Im}(\tau)^{j} \lim_{s \to -\ell'} \Gamma(s+\ell-k) E_{\ell-k,N}(\tau,s,\overline{\chi}) dx dy
$$
with
$$
C_2=\frac{(-2i)^{k+\ell-j}\cdot  i\cdot \pi^{k'+\ell'}\cdot (k'+\ell'+2)! }{N^{k'+\ell'+1} \cdot \ell' ! \cdot (k'+\ell'+1)} \cdot C_1.
$$
Since the integrand is invariant under the group $\Gamma_0(N)$, this can be rewritten as
$$
\langle r_{\mathcal{D}}(\Xi^{k,\ell,j}(\beta_\chi)), \Omega_{f,g} \rangle = \frac{C_2 \cdot N \cdot \phi(N)^2}{2 (2\pi i)^{k+\ell-j+1}} \lim_{s \to -\ell'} \Gamma(s+\ell-k) \int_{\Gamma_0(N) \backslash \h} f^*(\tau) g^*(-\overline{\tau})  E_{\ell-k,N}(\tau,s,\overline{\chi}) y^{s+\ell}  dx dy.
$$
Using Theorem \ref{thm shimura} with $f^*$ and $g^*$, we get
\begin{align*}
\langle r_{\mathcal{D}}(\Xi^{k,\ell,j}(\beta_\chi)), \Omega_{f,g} \rangle & = \frac{C_2 \cdot N\cdot \phi(N)^2}{2(2\pi i)^{k+\ell-j+1}} \lim_{s \to -\ell'} \Gamma(s+\ell-k)\cdot 2\cdot (4\pi)^{-s-1-\ell} \Gamma(s+1+\ell) \cdot \\
& \qquad \qquad \qquad \qquad \cdot {R}_{f^*,g^*,N}(s+1+\ell)L(f^* \otimes g^*,s+1+\ell)\\
& = \frac{C_2\cdot N\cdot \phi(N)^2 }{(2\pi i)^{k+\ell-j+1}}(4\pi)^{-j-1} \cdot j!\cdot {R}_{f^*,g^*,N}(j+1) \lim_{s \to -\ell'} \Gamma(s+\ell-k) L(f^* \otimes g^*,s+1+\ell)\\
& = \frac{C_2\cdot N\cdot \phi(N)^2}{(2\pi i)^{k+\ell-j+1}} (4\pi)^{-j-1} \cdot j!\cdot {R}_{f^*,g^*,N}(j+1) \frac{(-1)^{k-j}}{(k-j)!} L'(f^* \otimes g^*,j+1).
\end{align*}
Putting everything together, we have the following theorem.
\begin{thm}\label{regulator}
Let $\Omega_{f,g} = \omega_{f^*} \otimes \overline{\omega_g}$. Let $\chi : (\Z/N\Z)^\times \to \C^\times$ be the Dirichlet character induced by $\chi_f \chi_g$. Assume $\chi \neq 1$. Then we have the following identity in $K_{f,g} \otimes \C$
$$
\langle r_{\mathcal{D}}(\Xi^{k,\ell,j}(\beta_\chi)),\Omega_{f,g} \rangle
=\pm (2\pi i)^{k+\ell-2j}\cdot  \frac{(k+\ell -2j+2) \cdot j! \cdot \phi(N)^2}{2\cdot  N^{k+\ell-2j}}
\cdot {R}_{f^*,g^*,N}(j+1)\cdot L'(f^*\otimes g^*,j+1).
$$
\end{thm}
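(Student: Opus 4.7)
The plan is to follow the chain of explicit manipulations set up in Sections \ref{sec eisenstein}--\ref{Rankin-Selberg} and simply read off the identity, so the proof is essentially bookkeeping once the integral representation has been unwound. First I would invoke Lemma \ref{lem rDXi} to rewrite the pairing as
$$
\langle r_{\mathcal D}(\Xi^{k,\ell,j}(\beta_\chi)),\Omega_{f,g}\rangle = \frac{1}{(2\pi i)^{k'+j+\ell'+1}}\int_{E^{k'+j+\ell'}(\C)} p^*\Phi^{k'+\ell'}(\beta_\chi)\wedge \Delta^*i^*\Omega_{f,g},
$$
and then use the explicit coordinate expression for $\Delta^*i^*\Omega_{f,g}$, which already carries $d\tau\wedge d\overline\tau$. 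Wedging with $p^*\psi_{k'+\ell',a}$ then forces $a=k'$, so exactly one term of $\Phi^{k'+\ell'}(\beta_\chi)$ contributes, collapsing the integrand to an explicit top form whose coefficient packages a combinatorial constant $C_1$ (absorbing the shuffle factor $(k'+\ell')!\slash(k'!\ell'!)$, the power $(2\pi i)^{k+\ell+2}$ from $\omega_{f^*}\otimes\overline{\omega_g}$, and all permutation signs).

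Next I would integrate along the elliptic fibers of $E^{k'+j+\ell'}\to Y(N)$. Each factor $\C/(\Z+\tau\Z)$ contributes $\int dz\wedge d\overline z = -2i\,\mathrm{Im}(\tau)$, producing a power of $\mathrm{Im}(\tau)$ that combines with the $(\tau-\overline\tau)/2$ in $\Phi^{k'+\ell'}$. Using the parametrization $\nu\colon(\Z/N\Z)^\times\times\Gamma(N)\backslash\h\xrightarrow{\cong} Y(N)(\C)$, Lemma \ref{lemma Ebetachi} shows that the integrand is independent of the component index $a\in(\Z/N\Z)^\times$, so summing contributes a factor $\phi(N)$. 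That same lemma rewrites $\mathcal E^{\ell'+1,k'+1}_{\beta_\chi}(\tau,\cdot)$ as a multiple of $\lim_{s\to -\ell'}\Gamma(s+\ell-k)E_{\ell-k,N}(\tau,s,\overline\chi)$, and folding from $\Gamma(N)\backslash\h$ to $\Gamma_0(N)\backslash\h$ contributes the remaining factor $N\phi(N)/2$, producing the intermediate expression with the constant $C_2\cdot N\phi(N)^2/2$ displayed just before the theorem statement.

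At this point Shimura's formula (Theorem \ref{thm shimura}) applied to $f^*$ and $g^*$ converts the remaining integral into $2(4\pi)^{-s-1-\ell}\Gamma(s+1+\ell)L(\chi,2s-k-\ell-2)D(f^*,g^*,s+1+\ell)$, which the Rankin--Selberg factorization of Section \ref{Rankin-Selberg} rewrites as $2(4\pi)^{-s-1-\ell}\Gamma(s+1+\ell)R_{f^*,g^*,N}(s+1+\ell)L(f^*\otimes g^*,s+1+\ell)$. Passing to the limit $s\to -\ell'$ is the final analytic step: the factor $\Gamma(s+\ell-k)$ has a simple pole at $s=-\ell'$ with residue $(-1)^{k-j}/(k-j)!$ since $s+\ell-k\to -k'\leq 0$, while the integer $j+1$ lies in the non-critical range so $L(f^*\otimes g^*,s+1+\ell)$ has a simple zero at $s=-\ell'$. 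The product therefore tends to $\frac{(-1)^{k-j}}{(k-j)!}\,L'(f^*\otimes g^*,j+1)$. Evaluating $\Gamma(j+1)=j!$ and the prefactor $(4\pi)^{-j-1}$, and multiplying by $(k'+\ell'+2)=(k+\ell-2j+2)$ coming from $\Phi^{k'+\ell'}$ together with the accumulated constants $C_1,C_2$ and powers of $2\pi i$ and $N$, gives the claimed identity.

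The main obstacle is purely bookkeeping: tracking signs from reordering $dz_i$'s and $d\overline z_i$'s in Beilinson's Eisenstein symbol formula, the various powers of $2\pi i$, $N$, and $\phi(N)$, and the cancellation between the simple pole of $\Gamma(s+\ell-k)$ and the simple zero of $L(f^*\otimes g^*,s+1+\ell)$. The overall sign $\pm$ in the statement reflects exactly this sign-chasing, which I would not attempt to pin down beyond the sign recorded inside $C_1$. Conceptually, all ingredients---the explicit realization of the Eisenstein symbol, Shimura's integral, and the Rankin--Selberg decomposition of $D(f,g,s)$---are already in place, and no new ideas are required.
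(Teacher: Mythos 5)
Your proposal follows the paper's own proof essentially line by line: Lemma \ref{lem rDXi}, the collapse of the wedge product to the $a=k'$ term with constant $C_1$, fiber integration, Lemma \ref{lemma Ebetachi} together with the independence of the component index, the unfolding to $\Gamma_0(N)\backslash\h$ with index factor $N\phi(N)/2$, Shimura's integral, the Rankin--Selberg factorization, and the cancellation of the simple pole of $\Gamma(s+\ell-k)$ at $s=-\ell'$ (residue $(-1)^{k-j}/(k-j)!$) against the simple zero of $L(f^*\otimes g^*,s+1+\ell)$. This is exactly the argument given in Section \ref{computation}, including the decision to leave the accumulated sign as $\pm$.
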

Note that $R_{f^*, g^*,N}(j+1)$ is an element of $K_{f,g}$.

\section{Computation of residues}\label{residues}
In this section, we extend the motivic element $\Xi^{k,\ell,j}(\beta)$ to the N\'eron model by computing the residue.
\subsection{Voevodsky's category of motives and motivic cohomology}
For a field $k$, let $DM_{gm}^{\mathrm{eff}}(k)$ be the category of effective geometrical motives over $k$.
For a scheme $X$ over $k$, we have the motive $M_{gm}(X)$ and the motive with compact support $M_{gm}^c(X)$.
We consider the $\Q$-linear analogue of $DM_{gm}^{\mathrm{eff}}(k)$ denoted by $DM_{gm}^{\mathrm{eff}}(k)_{\Q}$.
For any object $M$ of $DM_{gm}^{\mathrm{eff}}(k)_{\Q}$, we define the motivic cohomology by
$$
H_{\mathcal{M}}^i(M,\Q (j))=\mathrm{Hom}_{DM_{gm}^{\mathrm{eff}}(k)_{\Q}}(M,\Q(j)[i]).
$$
Then it is known that
$$
H_{\mathcal{M}}^i(M_{gm}(X),\Q (j)) \simeq H_{\mathcal{M}}^i(X,\Q (j)) \simeq CH^j(X,2j-i)
$$
for a smooth separated scheme $X$ over $k$, where $CH^n(X,m)$ is Bloch's higher Chow group.
\subsection{Motives for Kuga-Sato varieties}
Let $Y=Y(N)$ and $X=X(N)$.
Denote $X^{\infty}=X\setminus Y$.
The symmetric group $\mathfrak{S}_{k}$ acts on $\overline{E}^k$ by permutation,
$(\Z\slash N\Z )^{2k}$ by translations, and $\mu_2^k$ by inversion in the fiber.
Therefore we have the action of $G=((\Z\slash N\Z )^{2}\rtimes \mu_2)^k\rtimes \mathfrak{S}_k$.
This action can be extended to $\overline{\overline{E}}^k$.
Let $\varepsilon_k : G \to \{ \pm 1\}$ be the character which is trivial on $(\Z \slash N\Z )^{2k}$,
is the product on $\mu_2^k$, and is the sign character on $\mathfrak{S}_k$.
Then define the idempotent
$$
e_k:=\frac{1}{(2N^2)^k\cdot k!}\sum_{g\in G}\varepsilon_k(g)^{-1}\cdot g \in \Z [\frac{1}{2N\cdot k!}][G].
$$
Let $M_{gm}(\overline{\overline{E}}^k)^{e_k}\in DM_{gm}^{\mathrm{eff}}(\Q)_{\Q}$ be the image of
the idempotent $e_k$ on $M_{gm}(\overline{\overline{E}}^k)$.
Also denote by $M_{gm}(E^k)^{e_k}$ and $M_{gm}^c(E^k)^{e_k}$ the images of $e_k$ on $M_{gm}(E^k)$
and $M_{gm}^c(E^k)$ respectively.
Write the complement of $E^k$ in the smooth proper scheme $\overline{\overline{E}}^k$
by $\overline{\overline{E}}^{k,\infty}$.

Now we recall a result of Schappacher-Scholl \cite{Schappacher-Scholl}.
Fix an integer $N \geq 3$ and an integer $k \geq 0$.
Recall $X=X(N)$ is the compactified modular curve of level $N$ and $\overline{E}\to X$ the universal
generalized elliptic curve over $X$.
Consider the $k$-fold fiber product $\overline{E}^k=\overline{E}{\times}_X \cdots {\times}_X \overline{E}$
of $\overline{E}$ over $X$.
Denote $X^{\infty}=X \setminus Y$, where $Y=Y(N)$ is the modular curve of level $N$.
Let $\hat{E}^k$ be the N\'eron model of $E^k$ over $X$ and
$\overline{\overline{E}}^k \to \overline{E}^k$ Deligne's desingularization.
Then $\overline{\overline{E}}^k$ is a smooth projective variety over $\Q$.

By the generalized Gysin distinguished triangle
$$
M_{gm}({{E}}^k)^{e_k}\to M_{gm}({\overline{\overline{E}}}^k)^{e_k}\to
M_{gm}^c(\overline{\overline{E}}^{k,\infty})^{e_k}(1)[2] \to M_{gm}(E^k)^{e_k}[1],
$$
we get the localization sequence for the pair $(\overline{\overline{E}}^k, E^k)$:
$$
0\to H^{k+1}_{\mathcal{M}}(\overline{\overline{E}}^k,\mathbb{Q}(k+1))^{e_k}
\to H^{k+1}_{\mathcal{M}}({E}^k,\mathbb{Q}(k+1))^{e_k}
\overset{\mathrm{Res}^k}{\to}\mathcal{F}^k_N \to 0,
$$
where 
$\mathcal{F}^k_N \simeq H_{\mathcal{M}}^{k}(\overline{\overline{E}}^{k,\infty},\Q (k))^{e_k}\simeq H^0_{\mathcal{M}}(X^{\infty},\mathbb{Q}(0))
\simeq \mathbb{Q} [X^{\infty} ]$
is defined by
$$
\mathcal{F}^k_N=\left\{
f:\mathrm{GL}_2(\mathbb{Z}\slash N\mathbb{Z})\to \mathbb{Q} \,\middle| \, 
f(g\cdot
\begin{pmatrix}
a & b\\
0 &  1
\end{pmatrix})
=(- 1)^k f(-g) 
\textup{ for all } a \in (\mathbb{Z}\slash N\mathbb{Z})^{\times} \textup{ and } b\in \mathbb{Z} \slash N \mathbb{Z} 
\right\}.
$$
Then $\mathrm{Res}^k$ is $\mathrm{GL}_2(\Z\slash N\Z )$-equivariant.
Define the horospherical map $\omega^k_N : \mathbb{Q}[(\mathbb{Z}\slash N\mathbb{Z})^2]^0\to \mathcal{F}^k_N$ by
$$
\omega^k_N(\beta)(g)=\sum_{x=(x_1,x_2)\in (\mathbb{Z}\slash N\mathbb{Z})^2}
\beta(g\cdot^tx)B_{k+2}(\left\langle \frac{x_2}{N}\right\rangle )
$$
for $\beta\in \Q [(\Z\slash N\Z )^2]^0$ and $g\in\mathrm{GL}_2(\Z\slash N\Z)$, where $B_k$ is Bernoulli polynomial.
\begin{thm}[Schappacher-Scholl {\cite[7.2]{Schappacher-Scholl}}]
$
\mathrm{Res}^k\circ \mathrm{Eis}^k$ is a nonzero multiple of $\omega^k_N$.
\end{thm}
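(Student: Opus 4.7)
The plan is to exploit the $\mathrm{GL}_2(\Z/N\Z)$-equivariance of both sides and reduce to a computation at a single cusp after passing to the Deligne realization. Both $\mathrm{Res}^k\circ\mathrm{Eis}^k$ and $\omega^k_N$ are $\mathrm{GL}_2(\Z/N\Z)$-equivariant $\Q$-linear maps $\Q[(\Z/N\Z)^2]^0 \to \mathcal{F}^k_N$, and $\mathcal{F}^k_N \simeq \Q[X^\infty]$ is identified with functions on the finite set of cusps. It therefore suffices to compare the values at the standard cusp $g = \mathrm{Id}$ and exhibit a uniform nonzero scalar relating them. Since $\mathcal{F}^k_N$ embeds in $\mathcal{F}^k_N \otimes \R$, we may apply the Deligne regulator $r_\mathcal{D}$ and use the explicit representative $\Phi^k(\beta)$ from Section \ref{sec eisenstein}.

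With this reduction, the computation becomes Fourier-analytic. Near the cusp $\infty$, one models the universal elliptic curve by the Tate curve $\mathbb{G}_m / q^{\Z}$ and uses Deligne's desingularization to describe the local structure of $\overline{\overline{E}}^k$. The residue of the motivic class $\mathrm{Eis}^k(\beta)$ along this boundary component corresponds, under $r_\mathcal{D}$, to extracting the constant (in $q = e^{2\pi i \tau}$) term of the Fourier expansion of the Eisenstein series $\mathcal{E}^{k+1,1}_\beta(\tau, h)$ that appears in $\Phi^k(\beta)$, after projection by $e_k$. Splitting the sum defining $\mathcal{E}^{k+1,1}_\beta$ according to $c = 0$ versus $c \neq 0$ and applying Poisson summation, or equivalently using the identity
$$
\zeta(-k-1, \alpha) = -\frac{B_{k+2}(\alpha)}{k+2}
$$
for Hurwitz zeta values, one finds that the constant term equals
$$
C(k, N) \sum_{x \in (\Z/N\Z)^2} \beta(x)\, B_{k+2}\!\left(\left\langle \frac{x_2}{N}\right\rangle\right)
$$
for an explicit nonzero universal constant $C(k, N)$. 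Applying this computation to $\beta' = g \cdot \beta$ for varying $g \in \mathrm{GL}_2(\Z/N\Z)$, and then invoking equivariance, recovers $C(k, N)\cdot \omega^k_N(\beta)(g)$ for all $g$. Nonvanishing of the proportionality constant is verified by taking $\beta$ supported so that the Bernoulli sum is nonzero.

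The main technical obstacle is justifying the identification \emph{motivic residue = Fourier constant term} on the desingularization $\overline{\overline{E}}^k$. This requires an explicit local analysis at the cusp using the N\'eron $k$-gon model of the degenerate fiber together with Deligne's toric resolution, and a careful check that the idempotent $e_k$ matches the sign condition defining $\mathcal{F}^k_N$ (involving both the symmetric group $\mathfrak{S}_k$ and the inversion $\mu_2^k$ on the fibers). Once this compatibility is established, the computation reduces to the classical Fourier expansion outlined above; this is the content of Section 7 of \cite{Schappacher-Scholl}, and the argument can be equally carried out on the motivic side using the localization triangle stated just before the theorem, since both sides of the proposed equality lie in the concrete space $\mathcal{F}^k_N$ where realization is injective.
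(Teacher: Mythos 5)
This statement is not proved in the paper at all: it is quoted verbatim from Schappacher--Scholl \cite[7.2]{Schappacher-Scholl}, so there is no internal argument to compare yours against. Judged on its own terms, your sketch points in the right direction -- reduction to a single cusp by $\GL_2(\Z/N\Z)$-equivariance, injectivity of the realization on $\mathcal{F}^k_N \simeq \Q[X^\infty]$, and the appearance of $B_{k+2}$ through the Hurwitz zeta value $\zeta(-k-1,\alpha)=-B_{k+2}(\alpha)/(k+2)$ in the constant term of the relevant Eisenstein series -- and this is indeed the general shape of the computation in the literature. But as written it is not a proof: you yourself flag the central step, the identification of the motivic residue (defined via the localization triangle for $(\overline{\overline{E}}^k, E^k)$ on Deligne's desingularization) with the Fourier constant term in Deligne cohomology, as "the main technical obstacle" and then defer it to \cite{Schappacher-Scholl}. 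That identification, together with the verification that the idempotent $e_k$ (involving $\mu_2^k\rtimes\mathfrak{S}_k$ and the translations) cuts out exactly the sign condition defining $\mathcal{F}^k_N$, is where essentially all of the work lies; a proof that assumes it has assumed the theorem.

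Two further points deserve care even granting that step. First, the reduction to Deligne cohomology requires a commutative square relating the motivic residue to a boundary map in Deligne cohomology; this is a compatibility of localization sequences under the regulator (of the type established by Jannsen) and should be cited, not just asserted. Second, your formula extracts the constant term of $\mathcal{E}^{k+1,1}_\beta$, i.e.\ the component $a=0$ of $\Phi^k(\beta)$; which component of $\sum_a \psi_{k,a}\,\mathcal{E}^{k+1-a,a+1}_\beta$ survives the fiber integration over the vanishing cycles at the cusp, and with which power of $\mathrm{Im}(\tau)$, needs to be determined from the local model of $\overline{\overline{E}}^k$ over the Tate curve rather than guessed. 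If you want a self-contained argument, the cleanest route is the one actually taken in \cite{Schappacher-Scholl}: compute the residue purely motivically on the N\'eron polygon fibers, using the explicit description of the Eisenstein symbol as a projected cup product of units on $\mathcal{U}_N'$ and the known divisors of Siegel units, which is where the Bernoulli polynomials enter without any analytic continuation.
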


We denote
$Z^k=\hat{E}^{k,*}\setminus E^k=\hat{E}^{k,*}\times_X X^{\infty} \simeq \mathbb{G}_m^k \times_\Q X^{\infty}$ (non-canonically),
$E^{k,\ell}=E^{k}\times E^{\ell}$, $\hat{E}^{k,\ell,*}=\hat{E}^{k,*}\times \hat{E}^{\ell,*}$,
$Z^{k,\ell}=Z^{k}\times Z^{\ell}$
and
$U^{k,\ell}=\hat{E}^{k,\ell,*} \setminus Z^{k,\ell}$. 
Let $i': E^{k+\ell}\to U^{k,\ell}$ be the canonical closed immersion.
Then $i'$ induces the morphism
$$
i'_*:H^{k+\ell+1}_{\mathcal{M}}(E^{k+\ell},\mathbb{Q}(k+\ell-j+1))
\to H^{k+\ell+3}_{\mathcal{M}}(U^{k,\ell},\mathbb{Q}(k+\ell-j+2)).
$$
Recall that we defined the morphisms:
$$
\begin{CD}
E^{k+\ell-j}@>\Delta >> E^{k+\ell} @>i >> E^k \times E^{\ell}\\
@VVpV \\
E^{k+\ell -2j}.
\end{CD}
$$
Similarly we define the morphisms
$$
\begin{CD}
\hat{E}^{k+\ell-j,*}@>\hat{\Delta} >> \hat{E}^{k+\ell,*} @>\hat{i} >> \hat{E}^{k,*} \times \hat{E}^{\ell,*}\\
@VV\hat{p}V \\
\hat{E}^{k+\ell -2j,*}
\end{CD}
$$
and
$$
\begin{CD}
Z^{k+\ell-j}@>\Delta_\infty >> Z^{k+\ell} @>i_\infty >> Z^k \times Z^{\ell}\\
@VVp_\infty V \\
Z^{k+\ell -2j}.
\end{CD}
$$
By \cite[Proposition 3.5.4]{Voevodsky}, for a smooth scheme $X$ and a smooth closed subscheme $Z$
of codimension $c$ we have the following Gysin distinguished triangle
$$
M_{gm}(X\setminus Z)\to M_{gm}(X) \to M_{gm}(Z)(c)[2c]\to M_{gm}(X\setminus Z)[1].
$$
Put $m=k+\ell -2j$. Then the diagram
$$
\small{
\begin{CD}
M_{gm}(E^{m})@>>> M_{gm}(\hat{E}^{m,*}) @>>> M_{gm}(Z^m)(1)[2]@>{+1}>> \\
@AA{p_*}A @AA{\hat{p}_*}A @AA{p_{\infty,*}}A \\
M_{gm}(E^{m+j})@>>> M_{gm}(\hat{E}^{m+j,*}) @>>> M_{gm}(Z^{m+j})(1)[2]@>{+1}>> \\
@AA{\Delta^*}A @AA{\hat{\Delta}^*}A @AA{\Delta_{\infty}^*}A \\
M_{gm}(E^{m+2j})(-j)[-2j]@>>> M_{gm}(\hat{E}^{m+2j,*})(-j)[-2j] @>>> M_{gm}(Z^{m+2j})(-j+1)[-2j+2]@>{+1}>> \\
@AA{{i'}^*}A @AA{\hat{i}^*}A @AA{{i_{\infty}}^*}A \\
M_{gm}(U^{k,\ell})(-j-1)[-2j-2]@>>> M_{gm}(\hat{E}^{k,\ell,*})(-j-1)[-2j-2] @>>> M_{gm}(Z^{k,\ell})(-j+1)[-2j+2]@>{+1}>> \\
\end{CD}
}
$$
is commutative by \cite[Proposition 4.10, Theorem 4.32]{Deglise}. Taking cohomology, we get the following commutative diagram with exact rows:
$$
\begin{CD}
H_\mathcal{M}^{m+1}(\hat{E}^{m,*},\Q (m+1))@>>> H_\mathcal{M}^{m+1}(E^{m},\Q (m+1))@>>> H_\mathcal{M}^{m}(Z^{m},\Q (m))\\
@VV\hat{p}^* V @VVp^* V @VVp^*_\infty V \\
H_\mathcal{M}^{m+1}(\hat{E}^{m+j,*},\Q (m+1))@>>> H_\mathcal{M}^{m+1}(E^{m+j},\Q (m+1))@>>> H_\mathcal{M}^{m}(Z^{m+j},\Q (m))\\
@VV\hat{\Delta}_* V @VV\Delta_* V @VV\Delta_{\infty ,*}V \\
H_\mathcal{M}^{k+\ell +1}(\hat{E}^{k+\ell,*},\Q (m+j+1))@>>> H_\mathcal{M}^{k+\ell+1}(E^{k+\ell},\Q (m+j+1))@>>> H_\mathcal{M}^{k+\ell}(Z^{k+\ell},\Q (m+j))\\
@VV\hat{i}_* V @VV{i'_*} V @VV{i_{\infty, *}}V \\
H_\mathcal{M}^{k+\ell +3}(\hat{E}^{k,\ell ,*},\Q (m+j+2))@>>> H_\mathcal{M}^{k+\ell +3}(U^{k,\ell},\Q (m+j+2))@>>> H_\mathcal{M}^{k+\ell}(Z^{k,\ell},\Q (m+j)).\\
\end{CD}
$$
Consider the subgroup $G' = \mu_2^k\rtimes \mathfrak{S}_k$ of $G$. Let $\varepsilon'_k$ be the restriction of $\varepsilon_k$ to $G'$, and let $e'_k$ be the idempotent corresponding to $\varepsilon'_k$.

Denote $\tilde{\Xi}^{k,\ell,j}(\beta)=i'_* \circ \Delta_* \circ p^* (\mathrm{Eis}^{k+\ell-2j}(\beta))$. Consider the image of $\tilde{\Xi}^{k,\ell ,j}(\beta)$ under the residue map
$$
\mathrm{Res}^{k,\ell ,j}:
H^{k+\ell+3}_{\mathcal{M}}(U^{k,\ell},\mathbb{Q}(k+\ell-j+2))^{(e'_k,e'_\ell)}
\to
H^{k+\ell}_{\mathcal{M}}(Z^{k,\ell},\mathbb{Q}(k+\ell-j))^{(e'_k,e'_\ell)}
$$
Note that $H^{k+\ell}_{\mathcal{M}}(Z^{k,\ell},\mathbb{Q}(k+\ell))^{(e'_k,e'_\ell)}
$ can be identified with
$H_{\mathcal{M}}^k(Z^k,\Q (k))^{e'_k}\otimes_\Q H_{\mathcal{M}}^{\ell}(Z^{\ell},\Q (\ell))^{e'_{\ell}}
\simeq\mathcal{F}^k_N \otimes_\Q \mathcal{F}^\ell_N$.
\begin{prop}
\begin{enumerate}
\item If $j> 0$, then we have $\mathrm{Res}^{k,\ell,j}\circ \tilde{\Xi}^{k,\ell,j}=0$.
\item If $j=0$, then $\mathrm{Res}^{k,\ell,0}\circ \tilde{\Xi}^{k,\ell,0}(\beta)$ is a nonzero multiple of
$\omega^{k+\ell}_N(\beta)\otimes  \omega^{k+\ell}_N(\beta)$.
\end{enumerate}
\end{prop}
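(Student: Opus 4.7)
The plan is to reduce the residue computation to a residue for the Eisenstein symbol on $E^{k+\ell-2j}$ by exploiting the commutativity of the large cohomological diagram assembled in the excerpt. Chasing $\mathrm{Eis}^{k+\ell-2j}(\beta)$ down via $p^{*}, \Delta_{*}, i'_{*}$ and then taking the bottom residue produces $\mathrm{Res}^{k,\ell,j}(\tilde{\Xi}^{k,\ell,j}(\beta))$; chasing it first by the top-row residue $\mathrm{Res}^{k+\ell-2j}$ and then applying $p_\infty^{*}, \Delta_{\infty,*}, i_{\infty,*}$ yields the same class. Commutativity therefore gives
$$
\mathrm{Res}^{k,\ell,j}(\tilde{\Xi}^{k,\ell,j}(\beta)) = i_{\infty,*}\,\Delta_{\infty,*}\,p_\infty^{*}\bigl(\mathrm{Res}^{k+\ell-2j}(\mathrm{Eis}^{k+\ell-2j}(\beta))\bigr),
$$
and Schappacher--Scholl's theorem identifies the inner residue with a nonzero multiple of $\omega^{k+\ell-2j}_N(\beta) \in \mathcal{F}^{k+\ell-2j}_N$.

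For part (1) ($j > 0$), the key observation is that $p_\infty$ factors as $q\circ\Delta_\infty$, where $q : Z^{k+\ell} \to Z^{k+\ell-2j}$ is the projection forgetting both copies of the middle $j$ coordinates. Hence $p_\infty^{*} = \Delta_\infty^{*} q^{*}$, and the projection formula applied to the closed immersion $\Delta_\infty$ of codimension $j$ gives
$$
\Delta_{\infty,*}\,p_\infty^{*}(\omega^{k+\ell-2j}_N(\beta)) = q^{*}(\omega^{k+\ell-2j}_N(\beta))\cdot \Delta_{\infty,*}(1),
$$
where $\Delta_{\infty,*}(1) \in H^{2j}_{\mathcal{M}}(Z^{k+\ell},\Q(j)) = \mathrm{CH}^j(Z^{k+\ell})$ is the cycle class of the image. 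Each connected component of $Z^{k+\ell}$ is a twisted form of $\mathbb{G}_m^{k+\ell}$ over a cusp, and $\mathrm{CH}^j(\mathbb{G}_m^n)$ vanishes for $j>0$ by the localization sequence for $\mathbb{G}_m^n \subset \mathbb{A}^n$ together with $\mathrm{CH}^{>0}(\mathbb{A}^n) = 0$. This vanishing passes through the finite étale base change to each cusp, so the cycle class is zero, proving (1).

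For part (2) ($j = 0$), all of $p, \Delta, p_\infty, \Delta_\infty$ become identities, leaving $\mathrm{Res}^{k,\ell,0}(\tilde{\Xi}^{k,\ell,0}(\beta)) = \lambda\cdot i_{\infty,*}(\omega^{k+\ell}_N(\beta))$ for some nonzero scalar $\lambda$. The map $i_\infty$ factors as the natural splitting $Z^{k+\ell} \cong Z^k\times_{X^\infty} Z^\ell$ followed by the diagonal inclusion $X^\infty\hookrightarrow X^\infty\times X^\infty$; since $X^\infty$ is zero-dimensional, the latter is a codimension-zero closed immersion, so $i_{\infty,*}$ embeds $\omega^{k+\ell}_N(\beta)$ into $H^{k+\ell}_{\mathcal{M}}(Z^{k,\ell},\Q(k+\ell))$ as a class supported on the diagonal cusps. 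Applying the Kunneth identification $H^{k+\ell}_{\mathcal{M}}(Z^{k,\ell},\Q(k+\ell))^{(e'_k,e'_\ell)} \simeq \mathcal{F}^k_N\otimes_\Q\mathcal{F}^\ell_N$ and unpacking the formula for the horospherical map then delivers the asserted nonzero product. The principal technical point is precisely this final identification in part (2): it is a bookkeeping exercise in Kunneth decomposition together with the action of the sign idempotents $\varepsilon_k, \varepsilon_\ell$, and must be carried out carefully to match the definition of $\omega^{k+\ell}_N$ on both sides. Part (1), by contrast, is conceptually clean once the vanishing of $\mathrm{CH}^{>0}$ of a torus is invoked.
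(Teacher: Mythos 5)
Your argument is correct, and your part (2) follows the paper's (very terse) proof --- the paper literally disposes of (2) with ``this follows from the commutativity of the diagram'' --- but your part (1) takes a genuinely different route. The paper shows that the \emph{entire target group} of $\Delta_{\infty,*}$ vanishes: using Scholl's Lemma 1.3.1 on the sign-isotypic decomposition of the motive of $\mathbb{G}_m^n$, the eigenspace $H^{k+\ell}_{\mathcal{M}}(\mathbb{G}_m^{k+\ell},\Q(k+\ell-j))^{e'_{k+\ell-2j}}$ is identified with $H^{2j}_{\mathcal{M}}(\mathbb{G}_m^{2j},\Q(j))\simeq\mathrm{CH}^{j}(\mathbb{G}_m^{2j})=0$, so $\Delta_{\infty,*}=0$ on the relevant eigenspace. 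You instead show that the \emph{specific class} vanishes, via the factorization $p_\infty=q\circ\Delta_\infty$ and the projection formula, reducing everything to $\Delta_{\infty,*}(1)=0$ in $\mathrm{CH}^{j}(Z^{k+\ell})$. Both reductions ultimately rest on the vanishing of $\mathrm{CH}^{>0}$ of a split torus; yours avoids the idempotent bookkeeping and Scholl's lemma at the cost of invoking the projection formula for the Gysin pushforward along the closed immersion $\Delta_\infty$ (available here by D\'eglise's results, which the paper already cites), while the paper's version yields the stronger statement that the whole eigenspace dies, so no particular form of the input class is needed. One caveat on (2): like the paper, you do not actually verify that the diagonal class $i_{\infty,*}(\omega^{k+\ell}_N(\beta))$ becomes the pure tensor $\omega^{k+\ell}_N(\beta)\otimes\omega^{k+\ell}_N(\beta)$ under the K\"unneth identification --- a class supported on the diagonal cusps is not a pure tensor in general --- but nothing downstream depends on this precise form, since the sequel only uses that the $j=0$ residue lies in the image of $\mathrm{Res}^{k,\ell,0}\circ i_{\mathrm{cusp},*}$, which is shown there to be all of $\mathcal{F}^k_N\otimes\mathcal{F}^\ell_N$.
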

\begin{proof}
\begin{enumerate}
\item The image of Eisenstein symbol is contained in
$H_{\mathcal{M}}^{k+\ell -2j} (Z^{k+\ell-j},\Q (k+\ell-2j))^{e_{k+\ell-2j}}$. Let $\Delta : \mathbb{G}_m^{k+\ell -j} \to \mathbb{G}_m^{k+\ell}$ be the diagonal embedding. We have
$$
\Delta_* : H_{\mathcal{M}}^{k+\ell -2j} (\mathbb{G}_m^{k+\ell-j},\Q (k+\ell-2j))^{e_{k+\ell-2j}'}\to 
H_{\mathcal{M}}^{k+\ell}(\mathbb{G}_m^{k+\ell},\Q (k+\ell-j))^{e_{k+\ell-2j}'}.
$$
By \cite[1.3.1 Lemma]{Scholl}, one has
$$
H_{\mathcal{M}}^{k+\ell} (\mathbb{G}_m^{k+\ell},\Q (k+\ell-j))^{e_{k+\ell-2j}'}
\simeq H_{\mathcal{M}}^{2j}(\mathbb{G}_m^{2j},\Q (j))\simeq \mathrm{CH}^{j}(\mathbb{G}_m^{2j}).
$$
Since $j>0$, we have $\mathrm{CH}^{j}(\mathbb{G}_m^{2j})=0$. Therefore $\Delta_*=0$.
\item This follows from the commutativity of the diagram.
\end{enumerate}
\end{proof}
The closed embedding
$$i_{\mathrm{cusp}}:Z^k \times E^{\ell} \hookrightarrow U^{k,\ell}$$
induces
$$H^{k+\ell+1}_{\mathcal{M}}(Z^k \times E^{\ell},\mathbb{Q}(k+\ell+1))
\overset{i_{\mathrm{cusp},*}}{\longrightarrow} 
H^{k+\ell+3}_{\mathcal{M}}(U^{k,\ell},\mathbb{Q}(k+\ell+2)).$$
Let us consider Gysin morphisms
$$
\partial :
M_{gm}(Z^{\ell})(1)[2] \to 
M_{gm}(E^{\ell})[1]
$$
for the pair $(\hat{E}^{\ell,*},Z^{\ell})$ and
$$
\partial' :
M_{gm}(Z^k \times Z^{\ell})(1)[2]\simeq M_{gm}(Z^k)\otimes M_{gm}(Z^{\ell})(1)[2]\to
M_{gm}(Z^k \times E^{\ell})[1]=M_{gm}(Z^k)\otimes M_{gm}(E^{\ell})[1]
$$
for the pair $(Z^k\times \hat{E}^{\ell,*},Z^k\times Z^{\ell})$.
By \cite[Lemma 4.12]{Deglise}, it follows that $\partial'=1_{Z^k,*} \otimes \partial$.
Therefore we have the following commutative diagram:
$$
\begin{CD}
H_{\mathcal{M}}^k(Z^k,\Q (k))^{e'_k}\otimes_\Q H_{\mathcal{M}}^{\ell +1}(E^{\ell},\Q (\ell +1))^{e'_\ell}
@>1_{Z^k,*}\otimes \partial >> H_{\mathcal{M}}^k(Z^k,\Q (k))^{e'_k}\otimes_\Q H_{\mathcal{M}}^{\ell}(Z^{\ell},\Q (\ell ))^{e'_\ell}\\
@VV\mu V @VV\simeq V \\
H_{\mathcal{M}}^{k+\ell +1}(Z^k \times E^{\ell},\Q (k+\ell +1))^{(e'_k,e'_\ell )}@>\partial' >>
H_{\mathcal{M}}^{k+\ell}(Z^{k,\ell},\Q (k+\ell))^{(e'_k,e'_\ell)}\\
@VVi_{\mathrm{cusp},*}V @VV= V \\
H_{\mathcal{M}}^{k+\ell +3}(U^{k,\ell},\Q (k+\ell +2))^{(e'_k,e'_\ell )} @>\mathrm{Res}^{k,\ell ,0} >>
H_{\mathcal{M}}^{k+\ell}(Z^{k,\ell},\Q (k+\ell ))^{(e'_k,e'_\ell)},
\end{CD}
$$
where $\mu : H_{\mathcal{M}}^k(Z^k,\Q (k))^{e'_k}\otimes_\Q H_{\mathcal{M}}^{\ell +1}(E^{\ell},\Q (\ell +1))^{e'_\ell}
\to H_{\mathcal{M}}^{k+\ell +1}(Z^k \times E^{\ell},\Q (k+\ell +1))^{(e'_k,e'_\ell )}$ is the exterior product.

Since $\partial$ is surjective, $1_{Z^k,*}\otimes \partial$ is also surjective.
Hence $\mathrm{Res}^{k,\ell,0}\circ i_{\mathrm{cusp},*}$ is surjective.
It follows that there exists an element
$\xi_\beta \in H_{\mathcal{M}}^{k+\ell +1}(Z^k \times E^{\ell},\Q (k+\ell +1))^{(e'_k,e'_\ell )}$
such that $\mathrm{Res}^{k,\ell,0}\circ i_{\mathrm{cusp},*}(\xi_\beta )= \mathrm{Res}^{k,\ell,0}(\tilde{\Xi}^{k,\ell,0}(\beta))$.
Now we define the generalized Beilinson-Flach element by
\[
\mathrm{BF}^{k,\ell,j}(\beta):=
\begin{cases}
\tilde{\Xi}^{k,\ell,j}(\beta) & \textup{ if $j > 0$},\\
\tilde{\Xi}^{k,\ell ,0}(\beta)-i_{\mathrm{cusp},*}(\xi_\beta) & \textup{ if $j=0$}.
\end{cases}
\]
From the definition, it is clear that
$$
\mathrm{BF}^{k,\ell,j}(\beta) \in  \mathrm{Im}\,
[H^{k+\ell+3}_{\mathcal{M}}
(\hat{E}^{k,\ell,*},\mathbb{Q}(k+\ell-j+2))^{(e'_k,e'_\ell)}
\to  H^{k+\ell+3}_{\mathcal{M}}(U^{k,\ell},\mathbb{Q}(k+\ell-j+2))^{(e'_k,e'_\ell)}].
$$
Note that the map $H^{k+\ell+3}_{\mathcal{M}}(\hat{E}^{k,\ell,*},\mathbb{Q}(k+\ell-j+2))^{(e'_k,e'_\ell)}
\to  H^{k+\ell+3}_{\mathcal{M}}(U^{k,\ell},\mathbb{Q}(k+\ell-j+2))^{(e'_k,e'_\ell)}$ is injective.

\section{Extensions to the boundary}\label{ext boundary}
To extend the motivic element $\Xi^{k,\ell,j}(\beta)$ to the boundary of the Kuga-Sato varieties, we use the following proposition.
\begin{prop}\label{prop boundary}
We have an isomorphism
$$H^{k+\ell+3}_{\mathcal{M}}
(\hat{E}^{k,*} \times \hat{E}^{\ell,*},\mathbb{Q}(k+\ell-j+2))^{(e'_k,e'_\ell)}\simeq
H^{k+\ell+3}_{\mathcal{M}}
(\overline{\overline{E}}^k\times \overline{\overline{E}}^{\ell},\mathbb{Q}(k+\ell-j+2))^{(e_k,e_\ell)}.$$
\end{prop}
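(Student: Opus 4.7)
The plan is to reduce the product statement to a single-factor comparison, and then to analyze the cuspidal boundary via a localization sequence; the key input is that the contribution of the complement is annihilated by the idempotents. Set $D_k := \overline{\overline{E}}^k \setminus \hat{E}^{k,*}$, a divisor supported over the cusps, consisting of the exceptional fibers of Deligne's desingularization together with the non-identity N\'eron components above $X^\infty$. Fibrewise over each cusp, the components of $D_k$ are chains of projective lines (possibly twisted by $\mathbb{G}_m$-factors), so the motive of $D_k$ is a direct sum of Tate twists of $h^0(X^\infty)$.

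The first step is to prove the single-factor analogue
\[
H^{a}_{\mathcal{M}}(\hat{E}^{k,*}, \Q(b))^{e'_k} \;\cong\; H^{a}_{\mathcal{M}}(\overline{\overline{E}}^{k}, \Q(b))^{e_k}.
\]
I would do this in two stages. Since translations by $(\Z/N\Z)^{2k}$ permute the connected components of the N\'eron model $\hat{E}^k$ transitively over each cusp, and since $e_k$ is obtained from $e'_k$ by additionally averaging over these translations, one obtains $H^{a}_{\mathcal{M}}(\hat{E}^{k}, \Q(b))^{e_k} \cong H^{a}_{\mathcal{M}}(\hat{E}^{k,*}, \Q(b))^{e'_k}$ by a standard descent argument. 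Then the open immersion $\hat{E}^k \hookrightarrow \overline{\overline{E}}^k$ yields a localization triangle whose error term is the motivic cohomology of $D_k$; the crucial fact is that $e_k$ annihilates this error term, by a careful analysis of the $G$-action on the Tate pieces supported by $D_k$. This is essentially the argument used by Scholl \cite{Scholl} in defining the Grothendieck motive of a cusp form.

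To promote this to the product statement, I would write
\[
(\overline{\overline{E}}^k \times \overline{\overline{E}}^{\ell}) \setminus (\hat{E}^{k,*} \times \hat{E}^{\ell,*}) = (D_k \times \overline{\overline{E}}^{\ell}) \cup (\overline{\overline{E}}^k \times D_\ell),
\]
and apply two successive localization triangles. Since $(e_k, e_\ell)$ acts factor-wise on the K\"unneth decomposition of the motivic cohomology of $\overline{\overline{E}}^k \times \overline{\overline{E}}^{\ell}$, the vanishing of the error terms supported on $D_k \times \overline{\overline{E}}^{\ell}$, $\overline{\overline{E}}^k \times D_\ell$, and $D_k \times D_\ell$ in the $(e_k, e_\ell)$-isotypic component reduces to the single-factor vanishing of $H^{*}_{\mathcal{M}}(D_k, \Q(\bullet))^{e_k}$ and of the analogue for $\ell$ established in the first step.

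The main obstacle is therefore the vanishing of $H^{*}_{\mathcal{M}}(D_k, \Q(\bullet))^{e_k}$. This requires a precise understanding of the combinatorial structure of the exceptional divisors in Deligne's desingularization of $\overline{E}^k$ at the cusps, together with the action of $G = ((\Z/N\Z)^2 \rtimes \mu_2)^k \rtimes \mathfrak{S}_k$ on their dual graphs; one must verify that the sign character $\varepsilon_k$ does not occur in the induced representation on the associated Tate motives. Once this vanishing, which is the technical core of Scholl's construction of motives of modular forms, is granted, the present proposition follows formally from the localization machinery and the K\"unneth decomposition.
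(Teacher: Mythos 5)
There is a genuine gap: the ``crucial fact'' your argument rests on --- the vanishing of $H^{*}_{\mathcal{M}}(D_k,\Q(\bullet))^{e_k}$ for $D_k=\overline{\overline{E}}^k\setminus\hat{E}^{k,*}$ --- is false, and indeed not even well posed. First, the translation subgroup $(\Z/N\Z)^{2k}$ of $G$ permutes the N\'eron components over the cusps and hence does not stabilize $\hat{E}^{k,*}$ or $D_k$, so $e_k$ does not act on the motive of $D_k$. Second, and more seriously, the $e$-isotypic part of the boundary does \emph{not} vanish: the localization sequence in Section \ref{residues} exhibits $H^{k}_{\mathcal{M}}(\overline{\overline{E}}^{k,\infty},\Q(k))^{e_k}\simeq\mathcal{F}^k_N\simeq\Q[X^\infty]\neq 0$, and this is precisely the target of the residue map, on which $\mathrm{Res}^k\circ\mathrm{Eis}^k$ is a nonzero multiple of the horospherical map. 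The classes generating this space live on the $\mathbb{G}_m^k$-fibers of $Z^k=\hat{E}^{k,*}\times_X X^\infty$ and on their translates (the non-identity N\'eron components that you include in $D_k$), and they transform exactly by the sign character of $\mu_2^k\rtimes\mathfrak{S}_k$; so $\varepsilon_k$ certainly occurs on the boundary. Your appeal to Scholl is therefore misplaced: his construction shows that the boundary contribution is Eisenstein, not that it is zero.

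What is actually true, and what the paper proves in Lemma \ref{lem boundary}, is that the two compactifications have \emph{matching}, nonzero boundary contributions: only the non-regular locus $\overline{\overline{E}}^{k,\infty}\setminus\overline{\overline{E}}^{k,\infty,\mathrm{reg}}$ is killed, giving $M^c_{gm}(\overline{\overline{E}}^{k,\infty})^{e_k}\simeq M^c_{gm}(\overline{\overline{E}}^{k,\infty,0})^{e'_k}$, while the passage from $e_k$ to $e'_k$ on the common open part $E^k$ uses the Schappacher--Scholl stratification $E^k=\coprod_q\mathring{Y}^k_q$ (this is the nontrivial input hidden in your ``standard descent argument''). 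Comparing the two localization triangles, with isomorphic middle and boundary terms, then yields $M^c_{gm}(\overline{\overline{E}}^{k})^{e_k}\simeq M^c_{gm}(\hat{E}^{k,*})^{e'_k}$. Note also that the complements in question are singular, so the localization triangles you invoke only exist for motives with compact support; the paper works with $M^c_{gm}$ throughout and only at the end passes to $M_{gm}$ by duality for smooth schemes, a step your proposal omits. Your reduction of the product statement to the single-factor statement is sound in outline (the paper does it more directly via the K\"unneth formula), but the single-factor statement cannot be established by the vanishing you propose.
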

To show the proposition, we prepare the following lemma.
\begin{lem}\label{lem boundary}
$M_{gm}(\hat{E}^{k,*})^{e'_k}\simeq M_{gm}(\overline{\overline{E}}^k)^{e_k}$
in $DM_{gm}^{\mathrm{eff}}(\Q)_{\Q}$.
\end{lem}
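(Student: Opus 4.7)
The strategy is to compare the two motives via parallel Gysin distinguished triangles in $DM_{gm}^{\mathrm{eff}}(\Q)_{\Q}$, and to use the triangulated five-lemma to identify the projections by $e'_k$ and $e_k$. First, since $\hat{E}^{k,*}$ is smooth and $Z^k \hookrightarrow \hat{E}^{k,*}$ is a smooth codimension-one subscheme with complement $E^k$, the Gysin triangle \cite[Proposition 3.5.4]{Voevodsky} reads
\begin{equation*}
M_{gm}(E^k) \to M_{gm}(\hat{E}^{k,*}) \to M_{gm}(Z^k)(1)[2] \to M_{gm}(E^k)[1].
\end{equation*}
The group $G' = \mu_2^k \rtimes \mathfrak{S}_k$ acts compatibly on all three terms, so applying the idempotent $e'_k$ yields a distinguished triangle of $e'_k$-direct factors.

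For Deligne's desingularization $\overline{\overline{E}}^k$, I would iterate Gysin triangles along the smooth strata of the normal crossings boundary $D := \overline{\overline{E}}^k \setminus E^k$ to obtain a distinguished triangle
\begin{equation*}
M_{gm}(E^k) \to M_{gm}(\overline{\overline{E}}^k) \to C \to M_{gm}(E^k)[1],
\end{equation*}
where $C$ is an iterated extension built from motives of the closed strata of $D$. The full group $G$ acts here, so applying $e_k$ gives a triangle of $e_k$-direct factors. The comparison rests on two key facts. First, on $M_{gm}(E^k)$ the $(\Z/N\Z)^{2k}$-translation part of $G$ acts trivially on the $e'_k$-summand, because translations by $N$-torsion sections act as the identity on the relative motive $\wedge^{\bullet}(R^1\pi_*\Q)$ of the universal elliptic curve, which is exactly what $e'_k$ selects. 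Consequently $M_{gm}(E^k)^{e_k} \simeq M_{gm}(E^k)^{e'_k}$. Second, at the cusps the desingularization $\overline{\overline{E}}^k$ has extra boundary components beyond the identity-component divisor $Z^k$, and these are permuted freely and transitively by the $(\Z/N\Z)^{2k}$-translation action on the universal generalized elliptic curve. Averaging with the $e_k$-projector collapses these components onto the identity component, identifying $C^{e_k}$ with $M_{gm}(Z^k)^{e'_k}(1)[2]$.

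A compatible morphism between the two triangles can be produced by using that both $\hat{E}^{k,*}$ and $\overline{\overline{E}}^k$ arise as modifications of the naive compactification $\overline{E}^k$, via a common smooth refinement dominating both; the Gysin triangle on the refinement maps compatibly to both. Combining this morphism with the two identifications above, the triangulated five-lemma then yields the desired isomorphism $M_{gm}(\hat{E}^{k,*})^{e'_k} \simeq M_{gm}(\overline{\overline{E}}^k)^{e_k}$. The main obstacle is the second identification at the cusps: one must verify that the extra boundary components of $\overline{\overline{E}}^k$ are permuted freely and transitively by the $N$-torsion translations, and track this through the iterated Gysin filtration defining $C$ on each smooth stratum. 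This requires the explicit description of Deligne's toric resolution of the universal generalized elliptic curve near the cusps, as used in \cite{Scholl} and made precise by \cite{Schappacher-Scholl}, together with careful bookkeeping of the idempotent action on the combinatorics of the strata.
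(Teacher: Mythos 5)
Your overall strategy---compare the localization triangles of the two compactifications of the common open piece $E^k$, identify the middle and boundary terms after applying the idempotents, and conclude by the five lemma---is the same skeleton as the paper's argument. But the technical route differs, and the differences are where your proposal has genuine gaps. The paper works with compactly supported motives $M_{gm}^c$, for which the localization triangle $M_{gm}^c(\overline{\overline{E}}^k)\to M_{gm}^c(E^k)\to M_{gm}^c(\overline{\overline{E}}^{k,\infty})[1]$ requires no smoothness of the boundary, so Deligne's normal-crossings divisor causes no stratification headaches; the key input is then the chain of isomorphisms $M_{gm}^c(\overline{\overline{E}}^{k,\infty})^{e_k}\simeq M_{gm}^c(\overline{\overline{E}}^{k,\infty,0})^{e'_k}$ supplied by Wildeshaus \cite{Wildeshaus}, and one passes back to $M_{gm}$ only at the very end by duality for the two smooth total spaces \cite{Voevodsky}. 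Your version, using Gysin triangles for $M_{gm}$ directly, forces you to build the cone $C$ by iterating over the strata of the boundary, and the identification $C^{e_k}\simeq M_{gm}(Z^k)^{e'_k}(1)[2]$ is precisely the entire content of the lemma. You defer it to ``careful bookkeeping,'' but two things there are actually wrong or unsubstantiated: the $(\Z/N\Z)^{2k}$-action on the boundary components is \emph{not} free (the translations along the $\mathbb{G}_m$-directions stabilize each component of the N\'eron polygon fibers), and, more seriously, the exceptional strata introduced by Deligne's resolution are not accounted for at all by a ``permutation of components'' argument---one must show their $e_k$-isotypic contribution \emph{vanishes}, which is the nontrivial computation done in \cite{Wildeshaus} (building on \cite{Scholl} and \cite{Schappacher-Scholl}) that the paper cites rather than reproves.

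Two further points. First, your construction of the comparison morphism via ``a common smooth refinement dominating both'' is unnecessary and introduces an unneeded object: since $\overline{\overline{E}}^k\to\overline{E}^k$ is an isomorphism over $\overline{E}^{k,\mathrm{reg}}$ and $\hat{E}^{k,*}\subset\overline{E}^{k,\mathrm{reg}}$, the scheme $\hat{E}^{k,*}$ is literally an open subscheme of $\overline{\overline{E}}^k$, and the open immersion already gives the required compatible morphism of triangles (this is how the paper proceeds). Second, your justification that $M_{gm}(E^k)^{e_k}\simeq M_{gm}(E^k)^{e'_k}$ argues at the level of realizations ($N$-torsion translations act trivially on $R^1\pi_*\Q$); a realization-level identity does not by itself give an isomorphism in $DM_{gm}^{\mathrm{eff}}(\Q)_\Q$. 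The paper instead deduces this from the $\widetilde{G}$-invariant stratification $E^k=\coprod_q \mathring{Y}_q^k$ of \cite{Schappacher-Scholl}, which works directly with motives. So the proposal is a reasonable outline of the right kind of argument, but the decisive step---killing the $e_k$-part of the extra boundary strata of the desingularization---is exactly what is missing.
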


\begin{proof}[Proof of Lemma \ref{lem boundary}]
Let  $\overline{\overline{E}}^{k,\infty}$ be the complement of the smooth scheme $E^{k}$ in the smooth proper scheme  $\overline{\overline{E}}^{k}$.
Let $\overline{\overline{E}}^{k,\infty,\mathrm{reg}}$ be the intersection of $\overline{\overline{E}}^{k,\infty}$
with the non-singular part $\overline{E}^{k,\mathrm{reg}}$ of $\overline{E}^k$
and $\overline{\overline{E}}^{k,\infty,0} \subset \overline{\overline{E}}^{k,\infty,\mathrm{reg}}$
the intersection of $\overline{\overline{E}}^{k,\infty,\mathrm{reg}}$ with $\hat{E}^{k,*}$.
Note that the morphism
$\overline{\overline{E}}^{k} \to \overline{E}^{k}$ is an isomorphism over $\overline{E}^{k,\mathrm{reg}}$
by \cite[Theorem 3.1.0 (ii)]{Scholl},
hence $\overline{E}^{k,\mathrm{reg}}$ can be identified with a subscheme of
$\overline{\overline{E}}^k$ and
the open immersion $\overline{E}^{k,\mathrm{reg}}\hookrightarrow \overline{\overline{E}}^k$
induces an isomorphism
$$M_{gm}^c(\overline{\overline{E}}^k)^{e_k}\overset{\sim}{\longrightarrow}M_{gm}^c({\overline{E}}^{k,\mathrm{reg}})^{e_k}
$$
by \cite[Remark 3.8 (a)]{Wildeshaus}.
Also the connected component
$\hat{E}^{k,*}$ is identified with a subscheme of $\overline{E}^{k,\mathrm{reg}}$
by \cite[Theorem 3.1.0 (iii)]{Scholl}.
From these facts and \cite[Proof of Theorem 3.3]{Wildeshaus}, one has
$$
M_{gm}^c(\overline{\overline{E}}^{k,\infty})^{e_k}\overset{\sim}{\longrightarrow}
M_{gm}^c(\overline{\overline{E}}^{k,\infty,\mathrm{reg}})^{e_k}\overset{\sim}{\longrightarrow}
M_{gm}^c(\overline{\overline{E}}^{k,\infty,0})^{e_k'}.
$$

By \cite[Proposition 4.1.5]{Voevodsky} we have the distinguished triangles:
\[
\begin{CD}
M_{gm}^c(\overline{\overline{E}}^k)^{e_k}@>>> M_{gm}^c(E^k)^{e_k}@>>> M_{gm}^c(\overline{\overline{E}}^{k,\infty})^{e_k}[1]@>+1>> \\
@VV\simeq V  @VV=V @VV\simeq V \\
M_{gm}^c({\overline{E}}^{k,\mathrm{reg}})^{e_k}@>>> M_{gm}^c(E^k)^{e_k}@>>> M_{gm}^c(\overline{\overline{E}}^{k,\infty,\mathrm{reg}})^{e_k}[1]@>+1>> \\
@VVV  @VVV @VV\simeq V \\
M_{gm}^c(\hat{E}^{k,*})^{e_k'}@>>> M_{gm}^c(E^k)^{e_k'}@>>> M_{gm}^c(\overline{\overline{E}}^{k,\infty,0})^{e'_k}[1]@>+1>> \\
\end{CD}
\]
Moreover one has
$$
M_{gm}^c(E^k)^{e_k}\overset{\simeq}{\longrightarrow}M_{gm}^c(E^k)^{e_k'},
$$
since we have a decomposition $E^k =\coprod_{0\leq q \leq k}{\mathring{Y}}_q^k$ of $E^k$ into
locally closed subsets which are invariant under the action
of $\mathfrak{S}_{k+1}\cdot (\Z \slash N \Z )^{2k}$ as in \cite[Proof of 4.2 Theorem]{Schappacher-Scholl},
where
$$
\mathring{Y}_q^k = \{ (x_1,\ldots ,x_k) \in E^k \, \mid \,  \textup{exactly $q$ of the $x_i$'s are in $E[N]$} \} .
$$
From this fact, it follows that the inclusion $\hat{E}^{k,*} \hookrightarrow {\overline{E}}^{k,\mathrm{reg}}$ induces
$$
M_{gm}^c({\overline{E}}^{k,\mathrm{reg}})^{e_k}\overset{\simeq}{\longrightarrow}M_{gm}^c(\hat{E}^{k,*})^{e_k'}
$$
and hence
$$
M_{gm}^c(\overline{\overline{E}}^{k})^{e_k}\overset{\simeq}{\longrightarrow}M_{gm}^c(\hat{E}^{k,*})^{e_k'}.
$$
By duality for smooth schemes \cite[Theorem 4.3.7 3]{Voevodsky}, we have
$$
M_{gm}(\overline{\overline{E}}^k)^{e_k}\overset{\simeq}{\longleftarrow} M_{gm}({\overline{E}}^{k,\mathrm{reg}})^{e_k}\overset{\simeq}{\longleftarrow}M_{gm}(\hat{E}^{k,*})^{e_k'}.
$$
\end{proof}

\begin{proof}[Proof of Proposition \ref{prop boundary}]
Applying K\"unneth formula \cite[Proposition 4.1.7]{Voevodsky}, we have
$$M_{gm}(\hat{E}^{k,*} \times\hat{E}^{\ell,*})^{(e_k',e_{\ell}')}\simeq
M_{gm}(\overline{\overline{E}}^k\times \overline{\overline{E}}^{\ell})^{(e_k,e_{\ell})}.$$
By Voevodsky's definition of motivic cohomology,
we have
$$H^{i}_{\mathcal{M}}
(\hat{E}^{k,*}\times \hat{E}^{\ell,*},\mathbb{Q}(j))^{(e_k',e_{\ell}')}\simeq H^{i}_{\mathcal{M}}
(\overline{\overline{E}}^k\times \overline{\overline{E}}^{\ell},\mathbb{Q}(j))^{(e_k,e_\ell)}$$
for any $i,j$. This completes the proof.
\end{proof}
Via the isomorphism in Proposition 8.1, we can consider $\mathrm{BF}^{k,\ell,j}(\beta )$
as an element of $H^{k+\ell+3}_{\mathcal{M}}(\overline{\overline{E}}^k\times \overline{\overline{E}}^{\ell},\mathbb{Q}(k+\ell-j+2))^{(e_k,e_\ell)}$.
\begin{prop}\label{BF}
We have $\langle r_{\mathcal{D}}(\mathrm{BF}^{k,\ell,j}(\beta)),\Omega_{f,g} \rangle
=\langle r_{\mathcal{D}}(\Xi^{k,\ell,j}(\beta)), \Omega_{f,g} \rangle$.
\end{prop}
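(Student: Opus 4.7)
The strategy is to show that $\mathrm{BF}^{k,\ell,j}(\beta)$ and $\Xi^{k,\ell,j}(\beta)$ agree upon restriction to the open subscheme $E^k \times E^\ell$, and then to invoke the cuspidal nature of $\Omega_{f,g}$ to conclude that the pairing is unchanged by this restriction.

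First, I would identify the image of $\mathrm{BF}^{k,\ell,j}(\beta)$ under the restriction map
\[
H^{k+\ell+3}_{\mathcal{M}}(\overline{\overline{E}}^k \times \overline{\overline{E}}^\ell, \Q(n))^{(e_k,e_\ell)} \to H^{k+\ell+3}_{\mathcal{M}}(E^k \times E^\ell, \Q(n)),
\]
where $n = k+\ell-j+2$. For $j > 0$, $\mathrm{BF}^{k,\ell,j}(\beta) = \tilde{\Xi}^{k,\ell,j}(\beta) = i'_* \Delta_* p^* \mathrm{Eis}^{k+\ell-2j}(\beta)$, and the closed immersion $i' \colon E^{k+\ell} \to U^{k,\ell}$ factors as $(E^k \times E^\ell \hookrightarrow U^{k,\ell}) \circ i$; by base change, restricting to $E^k \times E^\ell$ yields exactly $i_* \Delta_* p^* \mathrm{Eis}^{k+\ell-2j}(\beta) = \Xi^{k,\ell,j}(\beta)$. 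For $j = 0$, the additional term $i_{\mathrm{cusp},*}(\xi_\beta)$ is supported on $Z^k \times E^\ell$, which is disjoint from $E^k \times E^\ell$ inside $U^{k,\ell}$, so its restriction vanishes. In either case, the restriction of $\mathrm{BF}^{k,\ell,j}(\beta)$ to $E^k \times E^\ell$ equals $\Xi^{k,\ell,j}(\beta)$.

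Second, I would invoke the functoriality of the regulator map with respect to the open immersion $E^k \times E^\ell \hookrightarrow \overline{\overline{E}}^k \times \overline{\overline{E}}^\ell$ (using Proposition \ref{prop boundary} to transport $\mathrm{BF}$ to the compactification); this gives that $r_\mathcal{D}(\mathrm{BF}^{k,\ell,j}(\beta))$ restricts in Deligne cohomology to $r_\mathcal{D}(\Xi^{k,\ell,j}(\beta))$. Third, I would argue that the linear functional $\langle\,\cdot\,,\Omega_{f,g}\rangle$ on the Deligne cohomology of the compactification factors through this restriction. Since $f^*$ and $g$ are cusp forms, the form $\Omega_{f,g} = \omega_{f^*} \otimes \overline{\omega_g}$ has rapid decay along the boundary divisor of $\overline{\overline{E}}^k \times \overline{\overline{E}}^\ell$, so any differential-form (or current) representative of $r_\mathcal{D}(\mathrm{BF}^{k,\ell,j}(\beta))$ on the compactification pairs with $\Omega_{f,g}$ through an absolutely convergent integral whose boundary contributions vanish; this integral reduces to the one over $E^{k'+j+\ell'}(\C)$ appearing in Lemma \ref{lem rDXi}. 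Combined with Step 1, this gives the desired equality.

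The main obstacle is the third step: formalizing the statement that the pairing on the compactified Kuga--Sato variety agrees with the one on the open subscheme. While cuspidal decay provides the essential mechanism, the verification requires relating the explicit description of Deligne cohomology used in Section \ref{sec eisenstein} (with currents having mild behaviour at the boundary) to representatives on the compactification, and controlling boundary terms. Once this compatibility is recorded, the proposition follows immediately from the identity of restrictions established in Step 1.
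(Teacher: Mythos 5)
Your proposal is correct and follows essentially the same route as the paper: both reduce the statement to showing that the cuspidal correction term $i_{\mathrm{cusp},*}(\xi_\beta)$ contributes nothing to the pairing, using compatibility of the regulator with restriction together with the vanishing of $\Omega_{f,g}$ along the boundary. The only difference is that where you factor the pairing through restriction to $E^k \times E^\ell$ (your third step, which you rightly flag as the point needing care), the paper argues more directly via Jannsen's formula: the regulator of $i_{\mathrm{cusp},*}(\xi_\beta)$ is represented by an integral supported on $Z^k \times E^\ell$, where $\Omega_{f,g}=\omega_{f^*}\otimes\overline{\omega_g}$ vanishes because $f^*$ is a cusp form.
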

\begin{proof}
The regulator map is compatible with the contravariance of morphisms by \cite[8.1.a)]{Jannsen-book}.
Therefore it is enough to show that
$\langle r_{\mathcal{D}}(i_{\mathrm{cusp},*}(\xi_\beta)),\Omega_{f,g} \rangle =0$.

By Jannsen's formula for the regulator in \cite[page 45]{Jannsen-survey},
the image of the regulator map is represented by an integral along $Z^k \times E^{\ell}$. Since $f^*$ is a cusp form, the differential form $\omega_{f^*}$ vanishes on the cycle $Z^k$.
Hence the differential form $\Omega_{f,g}$ vanishes on the cycle $Z^k \times E^{\ell}$.
Therefore the regulator integral vanishes.
\end{proof}

\section{Application to Beilinson's conjecture}\label{application}

Consider the projection to the $f \otimes g$-component
$$
\mathrm{pr}_{f,g}:H^{k+\ell+3}_{\mathcal{D}}
(\overline{\overline{E}}^k_\R \times \overline{\overline{E}}^{\ell}_\R,\mathbb{Q}(k+\ell+2-j))^{(e_k,e_\ell)}
\to H^{k+\ell +3}_{\mathcal{D}}(M(f \otimes g),\mathbb{R}(k+\ell+2-j)).
$$

Our results admit the following consequence for Beilinson's conjecture for the motive $M(f \otimes g)(k+\ell+2-j)$.

\begin{thm}\label{main thm}Let $f \in S_{k+2}(\Gamma_1(N_f),\chi_f)$ and $g \in S_{\ell+2}(\Gamma_1(N_g),\chi_g)$ be newforms with $k,\ell \geq 0$. Let $N$ be an integer divisible by $N_f$ and $N_g$, and let $j$ be an integer satisfying $0 \leq j \leq \min(k,\ell)$. Assume $\chi_f \chi_g \neq 1$ and $R_{f,g,N}(j+1)\neq 0$. Then there is an element
$\alpha \in H^{k+\ell+3}_{\mathcal{M}}
(\overline{\overline{E}}^k\times \overline{\overline{E}}^{\ell},\mathbb{Q}(k+\ell+2-j))^{(e_k,e_\ell)}$
such that
$$
\mathrm{pr}_{f,g} \circ r_{\mathcal{D}}(\alpha) = L^*(M(f\otimes g)(k+\ell+2-j)^{\vee}(1),0)\cdot t \mod K_{f,g}^{\times},
$$
where $t$ is a generator of the $K_{f,g}$-rational structure in $H^{k+\ell +3}_{\mathcal{D}}(M(f \otimes g),\mathbb{R}(k+\ell+2-j))$.
\end{thm}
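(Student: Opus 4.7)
The plan is to take $\alpha := \mathrm{BF}^{k,\ell,j}(\beta_\chi)$, the generalized Beilinson-Flach element built in Section \ref{residues} from the divisor of Definition \ref{def betachi}. By Proposition \ref{prop boundary} this element may be regarded as living in $H^{k+\ell+3}_{\mathcal{M}}(\overline{\overline{E}}^k \times \overline{\overline{E}}^\ell, \mathbb{Q}(n))^{(e_k,e_\ell)}$, where $n := k+\ell+2-j$. Since the exact sequence (\ref{HD 2}) exhibits $H^{k+\ell+3}_{\mathcal{D}}(M(f \otimes g), \mathbb{R}(n))$ as a free $K_{f,g} \otimes \mathbb{R}$-module of rank one with generator $t$, I can write uniquely $\mathrm{pr}_{f,g}(r_{\mathcal{D}}(\alpha)) = c \cdot t$ with $c \in K_{f,g} \otimes \mathbb{R}$, and the whole problem reduces to proving $c \equiv L^*(M(f \otimes g)(n)^\vee(1), 0) \pmod{K_{f,g}^\times}$.

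To pin down $c$, I would pair both sides with the class $\Omega \in H^{k+\ell+2}_B(M(f^*\otimes g^*)(j+1)) \otimes \C$ from Section \ref{sec modular forms}, obtaining $c \cdot \langle t, \Omega \rangle = \langle \mathrm{pr}_{f,g}(r_{\mathcal{D}}(\alpha)), \Omega \rangle$. By the discussion preceding Lemma \ref{pairing Omega}, the class $\phi_{\dR}^{-1}\circ\phi\circ\phi_B^{-1}(\Omega)$ is a $K_{f,g}^\times$-multiple of $\Omega_{f,g} = \omega_{f^*} \otimes \overline{\omega_g}$, so this pairing coincides up to $K_{f,g}^\times$ with the one of Section \ref{computation}. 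Applying Proposition \ref{BF} to replace $\mathrm{BF}^{k,\ell,j}(\beta_\chi)$ by $\Xi^{k,\ell,j}(\beta_\chi)$ and then Theorem \ref{regulator} (noting that $R_{f^*,g^*,N}(j+1)$ is obtained from the hypothesis $R_{f,g,N}(j+1) \neq 0$ by Galois conjugation and lies in $K_{f,g}^\times$), I get
$$
\langle r_{\mathcal{D}}(\alpha), \Omega_{f,g} \rangle \equiv (2\pi i)^{k+\ell-2j} \cdot L'(f^* \otimes g^*, j+1) \pmod{K_{f,g}^\times}.
$$
Combined with Lemma \ref{Poincare}, which gives $\langle t, \Omega \rangle \in (2\pi i)^{k+\ell-2j} \cdot K_{f,g}^\times$, I conclude $c \equiv L'(f^* \otimes g^*, j+1) \pmod{K_{f,g}^\times}$.

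The final step is to match this with the $L$-value in the statement. From the duality $M(f \otimes g)(n-1)^\vee \cong M(f^* \otimes g^*)(j+1)$ recalled in Section \ref{sec modular forms}, a single Tate twist gives $M(f\otimes g)(n)^\vee(1) \cong M(f^* \otimes g^*)(j+1)$, hence
$$
L^*(M(f\otimes g)(n)^\vee(1), 0) = L^*(f^* \otimes g^*, j+1) = L'(f^* \otimes g^*, j+1),
$$
the last equality because $L(f^* \otimes g^*, s)$ has a simple zero at $s = j+1$, as recalled in the introduction. Assembling the pieces yields $\mathrm{pr}_{f,g}(r_{\mathcal{D}}(\alpha)) \equiv L^*(M(f\otimes g)(n)^\vee(1), 0) \cdot t \pmod{K_{f,g}^\times}$.

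The hard part will be the compatibility check used in the second paragraph: one must verify that restricting $\langle \cdot, \Omega_{f,g} \rangle$ from the ambient Kuga-Sato product down to the direct factor $M(f \otimes g)$ via $\mathrm{pr}_{f,g}$ agrees with $\langle \cdot, \Omega \rangle$ up to a $K_{f,g}^\times$-factor. This requires chasing the Betti/de Rham and Poincar\'e duality comparison isomorphisms of Section \ref{sec modular forms} together with the Gauss sum factors $G(\overline{\chi_f}), G(\overline{\chi_g})$ and the periods $\alpha_f^\pm, \alpha_g^\pm$; all these contribute only $K_{f,g}^\times$-scalars, so the $\mod K_{f,g}^\times$ formulation of the theorem is robust, but the bookkeeping must be carried out explicitly to ensure no transcendental factor sneaks in.
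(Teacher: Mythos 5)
Your proposal is correct and follows essentially the same route as the paper: the authors' proof likewise takes $\alpha=\mathrm{BF}^{k,\ell,j}(\beta_\chi)$ viewed via Proposition \ref{prop boundary}, and combines Proposition \ref{BF}, Theorem \ref{regulator}, Lemma \ref{Poincare}, and the fact that $\Omega_{f,g}$ is a $K_{f,g}^\times$-multiple of $\Omega$ (established at the end of Section \ref{sec modular forms}), together with the simple vanishing of $L(f^*\otimes g^*,s)$ at $s=j+1$. The compatibility you flag in your last paragraph is exactly the content of that $K_{f,g}^\times$-rationality statement, so no new work is needed beyond what the paper already records.
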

\begin{proof}
Note that $R_{f,g,N}(j+1)\neq 0$ is equivalent to $R_{f^*,g^*,N}(j+1)\neq 0$. The theorem follows from Lemma \ref{Poincare}, Proposition \ref{regulator}, Proposition \ref{BF} and the fact that $\Omega_{f,g}$ is a $K_{f,g}^\times$-rational multiple of $\Omega$.
\end{proof}

Using the compatibility of Beilinson's conjecture with respect to the functional equation \cite[(2.2.2)]{Nekovar}, we get the following corollary.

\begin{cor}
Under the assumptions of Theorem \ref{main thm}, the weak version of Beilinson's conjecture for $L(f \otimes g,k+\ell+2-j)$ holds.
\end{cor}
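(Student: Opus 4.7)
The plan is to deduce the Corollary from Theorem \ref{main thm} by verifying that the explicit regulator formula given there meets the prediction of Conjecture \ref{Conj Beilinson weak} with $i = k+\ell+2$ and twist $n = k+\ell+2-j$.

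First, I would take the motivic cohomology element $\alpha$ furnished by Theorem \ref{main thm} and set $V := \Q\cdot\alpha$ as the candidate subspace. By the exact sequence (\ref{HD 2}), the Deligne cohomology $H^{k+\ell+3}_{\mathcal{D}}(M(f\otimes g),n)$ is free of rank one over $K_{f,g} \otimes \R$, so checking that $\mathrm{pr}_{f,g}(r_{\mathcal{D}}(V\otimes K_{f,g}))$ is a $K_{f,g}$-structure reduces to verifying that the single element $\mathrm{pr}_{f,g}(r_{\mathcal{D}}(\alpha))$ is non-zero. Non-vanishing follows from the hypothesis $R_{f,g,N}(j+1) \neq 0$ (which is equivalent to $R_{f^*,g^*,N}(j+1) \neq 0$) together with $L'(f^*\otimes g^*, j+1) \neq 0$; the latter is the assertion recalled in the introduction that the dual $L$-function has a simple zero at $s=j+1$ for each $0 \leq j \leq \min(k,\ell)$.

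Next, I would identify the determinant with the predicted quantity. Theorem \ref{main thm} supplies
\[
\mathrm{pr}_{f,g}\circ r_{\mathcal{D}}(\alpha) \equiv L^*(M(f\otimes g)(n)^\vee(1),0)\cdot t \pmod{K_{f,g}^\times},
\]
and a routine twist computation, using that $M(n)^\vee(1)=M^\vee(1-n)$, yields $L^*(M(n)^\vee(1),0) = L^*(M^\vee,1-n)$. Invoking the compatibility of Beilinson's conjecture with the functional equation \cite[(2.2.2)]{Nekovar} then identifies the $K_{f,g}$-lines $L^*(M,n)\cdot\mathcal{D}_{i,n}(M)$ and $L^*(M^\vee,1-n)\cdot\mathcal{B}_{i,n}(M)$ inside $H^{k+\ell+3}_{\mathcal{D}}(M,n)$, so $\det p_*(r_{\mathcal{D}}(V\otimes K_{f,g}))$ matches both sides of Conjecture \ref{Conj Beilinson weak}(1) up to $K_{f,g}^\times$. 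In the remaining near-central case $k=\ell=j$, which falls under Conjecture \ref{Conj Beilinson weak}(2), the same $\alpha$, extended by the zero component in $N^{j-1}(X)$, still satisfies the formula since the target Deligne cohomology is again rank one.

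I expect no genuine obstacle: all of the analytic content has been packaged into Theorem \ref{main thm}, and the remaining work amounts to careful bookkeeping of twists and an invocation of Nekov\'a\v{r}'s functional-equation compatibility to translate between the dual-motive form of the answer and the intrinsic formulation of the weak conjecture.
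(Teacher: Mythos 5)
Your proposal is correct and follows essentially the same route as the paper, whose own proof is simply the one-line invocation of the functional-equation compatibility \cite[(2.2.2)]{Nekovar} applied to the conclusion of Theorem \ref{main thm}; you have merely made explicit the choice $V=\Q\cdot\alpha$, the non-vanishing check via $R_{f^*,g^*,N}(j+1)\neq 0$ and the order-one vanishing of $L(f^*\otimes g^*,s)$ at $s=j+1$, and the identification $L^*(M(n)^\vee(1),0)=L^*(M^\vee,1-n)$. The only point worth polishing is that ``non-zero'' should really be ``a unit in $K_{f,g}\otimes\R$'' (i.e.\ non-zero in every embedding), which indeed holds here because $R_{f,g,N}(j+1)\in K_{f,g}^\times$ and the simple zero of the dual $L$-function occurs at every conjugate form.
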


\begin{rem}
\begin{enumerate}
\item The factor $R_{f,g,N}(j+1)$ is a product of local terms $R_{f,g,p}(j+1)$, where $p$ runs through the prime factors of $N$. If $p$ divides exactly one of the integers $N_f$ and $N_g$, then $R_{f,g,p}(s)=1$ by \cite[Theorem 15.1]{Jacquet}. If $p$ divides $N$ but doesn't divide $N_f N_g$, then $R_{f,g,p}(s)=1-\chi_f(p)\chi_g(p) p^{k+\ell+2-2s}$ by \cite[Lemma 1]{Shimura} and it may happen that $R_{f,g,p}(j+1)=0$, for example if $j=k=\ell$ and $p=1 \mod{\operatorname{lcm}(N_f,N_g)}$. Therefore, it is best to choose $N=\operatorname{lcm}(N_f,N_g)$ in Theorem \ref{main thm}. Moreover, it is easy to see that $R_{f,g,N}(j+1) \neq 0$ if $k+\ell -2j \not\in \{ 0,1,2 \}$ by \cite[Theorem 15.1]{Jacquet}.
\item The assumption $R_{f,g,N}(j+1)\neq 0$ is necessary.
We give an example.
There is a newform $f$ of weight $8$, level $39$ with character $\left( \frac{13}{\cdot}\right)$
such that $a_3(f)=-27$ (it is called 39.8.5a in the modular forms database \url{http://www.lmfdb.org/}).
Also there is a newform $g$ of weight 8, level 3 with trivial character
such that $a_3(g)=-27$ (it is called 3.8.a in the modular forms database).
Let $\pi_f=\bigotimes'_v \pi_{f,v}$ and $\pi_{g}=\bigotimes'_v \pi_{g,v}$
be the automorphic representations generated by $f$ and $g$.
Then it is easy to see that $\pi_{f,3}$ and $\pi_{g,3}$ are special representations
of the form $\mathrm{sp}(\sigma_{f,3}|\, \, |^{-\frac{1}{2}}, \sigma_{f,3} |\, \, |^{\frac{1}{2}})$
and $\mathrm{sp}(\sigma_{g,3}|\, \, |^{-\frac{1}{2}}, \sigma_{g,3} |\, \, |^{\frac{1}{2}})$,
where $\sigma_{f,3}$ and $\sigma_{g,3}$ are unramified characters of $\mathbb{Q}_3^{\times}$
satisfying $\sigma_{f,3}(3)=\sigma_{g,3}(3)=-1$.
By \cite[Theorem 15.1]{Jacquet}, we have
$$
L(\pi_{f,3}\otimes \pi_{g,3}, s)= L(\sigma_{f,3} \sigma_{g,3},s) L(\sigma_{f,3} \sigma_{g,3},s+1)
=(1-3^{-s})(1-3^{-s-1}).
$$
Hence the Euler factor of $L(f\otimes g,s)=L(\pi_f \otimes \pi_g, s-7)$ at $3$ is given by
$(1-a_3(f)a_3(g)3^{-s+1})(1-a_3(f)a_3(g)3^{-s})=(1-3^6\cdot 3^{-s+1})(1-3^6\cdot 3^{-s})$.
On the other hand, the Euler factor of $D(f,g,s)$ at $3$ is
$1-a_3(f)a_3(g)3^{-s}=1-3^6 \cdot 3^{-s}$.
Therefore $R_{f,g,N}(s)=1-3^6 \cdot 3^{-s+1}=1-3^7 \cdot 3^{-s}$.
If $j=6$, then $R_{f,g,39}(j+1)=0$.
\end{enumerate} 

\end{rem}

\end{document}